\documentclass[graybox]{SNmult}

\usepackage{type1cm}        
\usepackage{makeidx}         
\usepackage{graphicx}        
\usepackage{tikz}
\usetikzlibrary{arrows.meta}
\usepackage{subcaption}

\usepackage{multicol}        
\usepackage[bottom]{footmisc}
\usepackage{mathtools}

\usepackage{newtxtext}       %
\usepackage[varvw]{newtxmath}       

\makeindex             

\begin{document}
\title*{Isogeometric Discretizations for the Spectrum of the Laplace Operator: Outlier-Free Spline Bases}
\titlerunning{Isogeometric Discretizations: Outlier-Free Spline Bases}
\author{Damiano Ricci\orcidID{0009-0007-2239-0299}
and\\ Carla Manni\orcidID{0000-0002-1519-4106}
and\\ Hendrik Speleers\orcidID{0000-0003-4110-3308}
}

\authorrunning{Damiano Ricci, Carla Manni, Hendrik Speleers}
\institute{Damiano Ricci, Carla Manni, Hendrik Speleers \at University of Rome Tor Vergata, Department of Mathematics, Via della Ricerca Scientifica 1, 00133 Rome, Italy,
\email{damianoricci1998@gmail.com}, \email{manni@mat.uniroma2.it}, \email{speleers@mat.uniroma2.it}
}
%
%
\maketitle
{\vspace*{-1cm}}

\abstract{
Optimal spline subspaces are an elegant and efficient tool to remove spurious outliers in isogeometric Galerkin discretizations for the approximation of the spectrum of the Laplace operator. For practical purposes, it is valuable to have a basis construction for such spaces with good computational and spectral properties. We provide a characterization of the bases that enjoy a B-spline-like support structure and whose mass and stiffness matrices are simultaneously diagonalizable. It turns out that these mass and stiffness matrices admit explicitly known closed-form expressions for their eigenvalues, implying that the considered bases are outlier-free. A numerical procedure for constructing such bases is also presented.
\keywords{Isogeometric analysis $\cdot$ Eigenvalue problems $\cdot$ Outlier-free discretizations $\cdot$ Optimal spline subspaces}
}

\section{Introduction}
\label{sec:1}
Isogeometric Galerkin methods based on spline spaces with maximal smoothness on uniform grids provide an excellent approximation of almost the entire spectrum of the Laplace operator, in contrast to what is observed when $C^0$ finite element methods are used \cite{Cottrell2006}. These spline discretizations, however, suffer from a small number of spurious eigenvalues, called outliers. The number of outliers increases with the polynomial degree but, in the univariate case, is independent of the number of degrees of freedom.

Although limited in number, outliers degrade the overall quality of the discretization, affect the choice of time steps in explicit dynamics, and jeopardize the accurate approximation of high frequencies. It is well known that the presence of outliers is related to the treatment of boundary conditions, which induces low-rank perturbations in the otherwise clean algebraic structure of the matrices involved in the discretization \cite{Garoni2014,Manni2022}.

Recently, spline subspaces have been developed that are theoretically proven to be \emph{outlier-free}, while still preserving the good approximation accuracy of the entire spectrum. They are obtained by imposition of special boundary conditions and are optimal in the sense of Kolmogorov $n$-widths \cite{Manni2022}.

The efficient use of such outlier-free spaces relies on the ability to represent their elements in terms of a basis with good computational properties. Since the spaces of interest are subspaces of classical spline spaces of maximal smoothness whose elements satisfy additional homogeneous boundary conditions, it is natural to seek bases that coincide with B-splines, except for a few basis elements affected by the new boundary conditions. For any degree $p$ and for the most common boundary conditions (Dirichlet, Neumann, and mixed), a basis of the corresponding optimal space with this structure has been proposed in \cite{Divona2019,Manni2022}, which can be easily constructed in terms of cardinal B-splines; see also \cite{FloaterSande2019,Lamsahel2025,Takacs2016} for some related work.

It turns out that the bases considered in \cite{Divona2019,Manni2022} additionally possess remarkable spectral properties: the eigenvectors of the corresponding stiffness matrix all coincide with the eigenvectors of the corresponding mass matrix, i.e., these matrices are simultaneously diagonalizable. Moreover, the corresponding mass and stiffness matrices admit explicitly known closed-form expressions for their eigenvalues. Therefore, such bases are called \emph{outlier-free}.
The above properties follow from the fact that the considered matrices exhibit a Toeplitz-minus-Hankel or Toeplitz-plus-Hankel structure \cite{Lamsahel2025}; see also \cite{Deng2021}. This spectral knowledge is relevant not only from a theoretical point of view but also because it enables the construction of efficient solvers for the associated linear systems.

In this paper, we characterize all bases of optimal spline spaces -- for any degree $p$ and any type of boundary condition (Dirichlet, Neumann, and mixed) -- that satisfy the following properties (see Definition~\ref{DEFINIZIONE}):
\begin{itemize}
\item the basis elements have the same support structure as those of the bases in \cite{Divona2019,Manni2022};
\item the corresponding mass and stiffness matrices are simultaneously diagonalizable.
\end{itemize}
We prove that, for each optimal space, any basis enjoying these properties can be obtained from the corresponding basis in \cite{Divona2019,Manni2022} by  an orthogonal matrix; see Theorem~\ref{TEOREMA}. This implies that the corresponding mass and stiffness matrices share the same eigenvalues; see Corollary~\ref{Basi_outlierfree}. In other words, all the selected bases are outlier-free.

The remainder of the paper is divided into five sections. Section~\ref{sec:2} collects notation and preliminary material on splines and graphs, while Section~\ref{sec:3} briefly summarizes the state of the art on outlier-free optimal spline spaces and their bases. Section~\ref{sec:4} contains the main result of the paper (Theorem~\ref{TEOREMA}) and its proof. Section~\ref{sec:5} presents a numerical optimization procedure that can be used to construct outlier-free bases with no pre-knowledge about any of them. We end in Section~\ref{sec:6} with some final remarks.

\section{Preliminaries and Notation}\label{sec:2}
In this section, we introduce some notation and basic results related to splines and graphs, of interest in the rest of the paper.

\subsection{Splines of Maximal Smoothness}
We start by summarizing some results about spline spaces and their bases. For more details, we refer to \cite{Lyche2018} and references therein.

Let $1\leq p \in \mathbb{N}$ and let
$\boldsymbol{\tau} \coloneqq \{0 = \tau_1 < \cdots < \tau_{m+1} = 1\} $
be a partition of the interval $[0,1]$ in $m$ elements.
The \emph{spline space} of degree $p$ and maximal smoothness on the partition $\boldsymbol{\tau}$ is defined by
\begin{equation*}
\mathbb{S}_{p,\boldsymbol{\tau}}
\coloneqq
\left\{
s \in C^{p-1}([0,1]) :
s_{|[\tau_i,\tau_{i+1}]} \in \mathbb{P}_p,\ i=1,\ldots,m
\right\},
\end{equation*}
where $\mathbb{P}_p$ is the space of polynomials of degree at most $p$.
This space has dimension $m+p$.

Let $N \in \mathbb{N}$ such that $N > p \geq 1$, and let $\boldsymbol{\xi}$ be a non-decreasing sequence of real values, called \emph{knots},
\begin{equation*}
\boldsymbol{\xi} \coloneqq \{\xi_1 \le \xi_2 \le \cdots \le \xi_{N+p+1}\}.
\end{equation*}
For $j=1,\ldots,N$, the $j$-th \emph{B-spline} of degree $p$
associated with the knots $\boldsymbol{\xi}$ is defined recursively by
\begin{equation*}
\mathcal{B}_{j,p,\boldsymbol{\xi}}(x)
\coloneqq \frac{x - \xi_j}{\xi_{j+p} - \xi_j} \, \mathcal{B}_{j,p-1,\boldsymbol{\xi}}(x)
\;+\;
\frac{\xi_{j+p+1} - x}{\xi_{j+p+1} - \xi_{j+1}} \,
\mathcal{B}_{j+1,p-1,\boldsymbol{\xi}}(x),
\end{equation*}
with
\begin{equation*}
\mathcal{B}_{j,0,\boldsymbol{\xi}} \coloneqq
\begin{cases}
1, & \text{if } x \in [\xi_j, \xi_{j+1}), \\
0, & \text{otherwise}.
\end{cases}
\end{equation*}
Here fractions with zero denominator are set equal to zero and we adopt the convention
\begin{equation*}
\mathcal{B}_{j,p,\boldsymbol{\xi}}(\xi_{N+p+1})\coloneqq\lim_{x \to \xi_{N+p+1}^{-}}\mathcal{B}_{j,p,\boldsymbol{\xi}}(x).
\end{equation*}
The $N=m+p$ B-splines of degree $p$ associated with the knots
\begin{equation}\label{osaitu}
\{\underbrace{\tau_1 = \cdots = \tau_1}_{p+1}
< \tau_2 < \cdots <\tau_m<
\underbrace{\tau_{m+1} = \cdots = \tau_{m+1}}_{p+1}\}
\end{equation}
form a basis of $\mathbb{S}_{p,\boldsymbol{\tau}}$, which is denoted by $\mathscr{B}_{p,\boldsymbol{\tau}}$ and referred to as the \emph{open-knot B-spline basis}.

Finally, the \emph{cardinal B-spline} of degree $p$ is defined recursively by
\begin{equation*}
\mathcal{N}_p(x)
\coloneqq \frac{x}{p}\,\mathcal{N}_{p-1}(x)
   + \frac{p+1-x}{p}\,\mathcal{N}_{p-1}(x-1),
\end{equation*}
with
\begin{equation*}
\mathcal{N}_0(x) \coloneqq
\begin{cases}
1, & \text{if } x \in [0,1),\\
0, & \text{otherwise}.
\end{cases}
\end{equation*}
For $t \in \mathbb{Q}$, we set
$\mathcal{N}_{t,p} \coloneqq \mathcal{N}_p(x - t)$, i.e.,
a rational translate of the cardinal B-spline of degree $p$.

\subsection{Graph Theory} \label{grafi}
We now collect some basic facts on graphs, to be used later in the paper. For more details, we refer to \cite{GodsilRoyle2001} and references therein.

An \emph{undirected graph} is a pair $G \coloneqq (V,E)$, where
$V \coloneqq \{1,\ldots,n\}$ is the set of \emph{nodes} (or vertices) and
$E \subseteq \big\{\{u,v\}\subseteq V: u\neq v\big\}$ is the set of
\emph{edges} (unordered pairs of nodes).
We assume graphs to be simple (no loops and no multiple edges).

With a given graph, one can associate a matrix encoding the node--edge relations.
Let $G \coloneqq (V,E)$ be an undirected graph with $|V|=n$ and $|E|=m$.
After fixing an arbitrary orientation for each edge $e\in E$
(which is only used to define consistent signs),
the \emph{incidence matrix} is the matrix
$B\in\mathbb{R}^{n\times m}$, whose columns are indexed by edges
and rows by nodes, defined by
\begin{equation*}
B_{v,e} \coloneqq
\begin{cases}
-1, & \text{if $v$ is the \emph{head} of $e$},\\
+1, & \text{if $v$ is the \emph{tail} of $e$},\\
 0, & \text{otherwise}.
\end{cases}
\end{equation*}

An undirected graph $G$ is said to be \emph{connected} if for every
pair of nodes $u,v\in V$ there exists a path joining them.
In general, $G$ can be decomposed into $c(G)$ maximal connected components, which are pairwise disjoint on the node set.

The following classical theorem characterizes the rank of the incidence matrix in terms of the connected components of the graph.
\begin{theorem}\label{Grafi1}
Let $G$ be an undirected graph with $n$ nodes and $c(G)$ maximal connected components. Then,
\begin{equation*}
\mathrm{rank}(B) = n - c(G),
\end{equation*}
where $B$ is an incidence matrix (for any orientation of the edges). In particular,	$G$  is connected if and only if $\mathrm{rank}(B) = n-1$.
\end{theorem}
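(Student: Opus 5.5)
The plan is to compute the left null space of $B$, i.e.\ $\ker(B^{T})\subseteq\mathbb{R}^n$, and show that $\dim\ker(B^{T}) = c(G)$. The rank--nullity theorem applied to $B^{T}\in\mathbb{R}^{m\times n}$ then gives $\mathrm{rank}(B)=\mathrm{rank}(B^{T}) = n - c(G)$. This argument does not depend on the chosen orientation, since reversing an edge only flips the sign of the corresponding column of $B$, and that changes neither the rank nor the kernel of $B^{T}$.

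\textbf{Step 1: description of $\ker(B^{T})$.} For $x\in\mathbb{R}^n$, the entry of $B^{T}x$ indexed by an edge $e$ with tail $u$ and head $v$ is $x_u - x_v$. Hence $x\in\ker(B^{T})$ if and only if $x_u=x_v$ for every edge $\{u,v\}\in E$.

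\textbf{Step 2: constancy on components.} If $x\in\ker(B^{T})$ and $u,w$ lie in the same connected component, pick a path $u=v_0,v_1,\ldots,v_k=w$. Applying Step~1 along consecutive edges gives $x_u=x_w$, so $x$ is constant on each component. Conversely, any vector that is constant on each component satisfies $x_u=x_v$ on every edge, because both endpoints of an edge belong to the same component. Therefore $\ker(B^{T})$ is exactly the space of vectors constant on components.

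\textbf{Step 3: dimension count.} Let $V_1,\ldots,V_{c(G)}$ be the components. Their indicator vectors $\mathbf{1}_{V_1},\ldots,\mathbf{1}_{V_{c(G)}}$ span this space, and they are linearly independent because their supports are disjoint and nonempty. Thus $\dim\ker(B^{T})=c(G)$, and rank--nullity yields the formula. The ``in particular'' statement follows because $c(G)\ge 1$ always (with $n\ge1$), and $G$ is connected exactly when $c(G)=1$.

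There is no real obstacle. The only point needing care is Step~2, namely that vectors in the kernel must be constant along paths. It is also worth noting the degenerate case of isolated nodes: each isolated node is its own component and contributes a zero row of $B$, which is consistent with the count above.
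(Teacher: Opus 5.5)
Your proof is correct and complete. You characterize $\ker(B^{T})$ as the vectors that are constant on each connected component, note that it is spanned by the linearly independent component indicator vectors, and then apply rank--nullity to $B^{T}$; this is the standard argument.

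The paper does not prove this theorem. It quotes it as classical from \cite{GodsilRoyle2001}, where it is established in essentially this way.

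Your Steps~1--2 in fact give more than the rank formula: they produce an explicit basis of $\ker(B^{T})$. This stronger form is what the paper tacitly uses in Lemma~\ref{IMPORTANTE} when it asserts $\ker(H)=\operatorname{span}(\mathbf{1}_{\mathrm{odd}},\mathbf{1}_{\mathrm{even}})$. That claim holds because the two components of the graph built in Lemma~\ref{HH} consist of the odd-indexed and the even-indexed vertices, respectively.
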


\section{Laplace Eigenvalue Problem: Isogeometric Discretizations}\label{sec:3}
We consider the eigenvalue problem associated with the 1D Laplace operator,
\begin{equation}\label{Laplace}
-u''= \omega^2 u, \quad \text{in } (0,1),
\end{equation}
and the following standard boundary conditions:
\begin{itemize}
\item Dirichlet boundary conditions (also referred to as fixed or type 0 boundary conditions),
\begin{equation}\label{Dirichlet}
u(0)=u(1)=0;
\end{equation}
\item Neumann boundary conditions (also referred to as free or natural or type 1 boundary conditions),
\begin{equation}\label{Neumann}
u'(0)=u'(1)=0;
\end{equation}
\item a combination of the previous ones (also referred to as mixed or type 2 boundary conditions),
\begin{equation}\label{Miste}
u(0)=u'(1)=0.
\end{equation}
\end{itemize}
The non-trivial exact solutions of \eqref{Laplace} subject to one of the boundary conditions \eqref{Dirichlet}--\eqref{Miste} form a numerable set of trigonometric functions, respectively,
\begin{alignat*}{3}
u_l(x) &\coloneqq \sin(\omega_l x),\quad &\omega_l &\coloneqq l \pi, \quad &l &= 1,2,\ldots \\
u_l(x) &\coloneqq \cos(\omega_l x),\quad &\omega_l &\coloneqq l \pi, \quad &l &= 0,1,2,\ldots \\
u_l(x) &\coloneqq \sin(\omega_l x),\quad &\omega_l &\coloneqq (l-1/2)\pi, \quad &l &=1,2,\ldots
\end{alignat*}

The weak form of problem \eqref{Laplace} reads as follows. Find non-trivial $u \in \mathbb{V}$ and $\omega^2 \in \mathbb{R}$ such that
\begin{equation*}
\int_0^1 u'(x)\, v'(x)\, \textrm{d}x
= \omega^2 \int_0^1 u(x)\, v(x)\, \textrm{d}x
\quad \forall v \in \mathbb{V},
\end{equation*}
where $\mathbb{V}$ is selected in accordance with the chosen boundary conditions:
\begin{itemize}
\item Dirichlet boundary conditions: $\mathbb{V}=\{v \in \mathbb{H}^1(0,1) : v(0) = v(1) = 0 \}$;
\item Neumann boundary conditions: $\mathbb{V}=\mathbb{H}^1(0,1)$;
\item mixed boundary conditions: $\mathbb{V}=\{v \in \mathbb{H}^1(0,1) : v(0) = 0 \}$.
\end{itemize}

Following the Galerkin approach, we consider a finite-dimensional subspace $\mathbb{V}_h$ of $\mathbb{V}$ spanned by a basis $\{\varphi_1, \ldots, \varphi_{n_h} \}$ and we look for approximate values $\omega_h$ to $\omega$ by solving
\begin{equation*}
K_h \mathbf{u}_h = (\omega_h)^2M_h\mathbf{u}_h,
\end{equation*}
where the stiffness matrix $K_h$ and the mass matrix $M_h$ consist of the elements
\begin{equation}\label{stiff-mass}
K_{h,i,j} \coloneqq \int_0^1 \varphi_j'(x)\varphi_i'(x)\, \textrm{d}x, \quad M_{h,i,j} \coloneqq \int_0^1\varphi_j(x)\varphi_i(x)\, \textrm{d}x,
\end{equation}
with $i,j = 1,\ldots,n_h$.
For $l = 1, \ldots,n_h$, an approximation of the frequency $\omega_l$ is given by the square root of the $l$-th eigenvalue of $M_h^{-1}K_h$, denoted by $\omega_{h,l}$. Here we assume that those eigenvalues are given in ascending order. Similarly, an approximation of the eigenfunction $u_l$ is obtained by considering
\begin{equation*}
u_{h,l}(x) \coloneqq \sum_{i=1}^{n_h} u_{h,l,i}\varphi_i(x),
\end{equation*}
where $\mathbf{u}_{h,l} \coloneqq (u_{h,l,1}, \ldots, u_{h,l,n_h})$ is the $l$-th eigenvector of $M_h^{-1}K_h$, properly normalized.
More information on this eigenvalue problem can be found in \cite{Boffi2010}.

In the classical isogeometric approach, the finite-dimensional subspace $\mathbb{V}_h$ is selected as a proper subspace of the spline space $\mathbb{S}_{p,\boldsymbol{\tau}}$ in accordance with the chosen boundary conditions:
\begin{itemize}
\item Dirichlet boundary conditions: $\mathbb{V}_h=\{s \in \mathbb{S}_{p,\boldsymbol{\tau}} : s(0) = s(1) = 0 \}$;
\item Neumann boundary conditions: $\mathbb{V}_h= \mathbb{S}_{p,\boldsymbol{\tau}}$;
\item mixed boundary conditions: $\mathbb{V}_h=\{s \in \mathbb{S}_{p,\boldsymbol{\tau}} : s(0) =  0 \}$.
\end{itemize}
The above choices, with a uniform partition $\boldsymbol{\tau}$, produce a very good approximation of the continuous spectrum, with increasing accuracy as the degree $p$ increases \cite{Cottrell2006}. However, the scheme still suffers from some numerical artifacts: a small portion of frequencies is poorly approximated. These spurious approximations are called \emph{outliers}; see \cite{Manni2022} and references therein.

\subsection{Outlier-Free Spline Spaces}
The outliers in the isogeometric eigenvalue approximation of the Laplace operator can be removed by taking special spline subspaces as the discretization space $\mathbb{V}_h$. These subspaces turn out to be optimal in the sense of Kolmogorov $n$-widths for appropriate function classes identified by problem \eqref{Laplace} with boundary conditions \eqref{Dirichlet}--\eqref{Miste}; see \cite{FloaterSande2019,Manni2022}. More precisely, let us consider the following $n$-dimensional spline spaces:
\begin{itemize}
\item Dirichlet boundary conditions:
\begin{equation}\label{S_p_0}
\mathbb{S}_{p,0} \coloneqq \bigl\{ s \in \mathbb{S}_{p,\boldsymbol{\tau}_{p,0}} : \partial^\alpha s(0) = \partial^\alpha s(1) = 0, \  0 \le \alpha \le p, \ \alpha \ \text{even} \bigr\},
\end{equation}
where
\begin{equation*}
\boldsymbol{\tau}_{p,0} \coloneqq
\begin{cases}
\,\bigl\{0, \frac{1}{n+1}, \frac{2}{n+1}, \ldots, \frac{n}{n+1}, 1\bigr\}, & p \text{ odd},\\[3pt]
\,\bigl\{0, \frac{1/2}{n+1}, \frac{3/2}{n+1}, \ldots, \frac{n+1/2}{n+1}, 1\bigr\}, & p \text{ even};
\end{cases}
\end{equation*}
\item Neumann boundary conditions:
\begin{equation}\label{S_p_1}
\mathbb{S}_{p,1} \coloneqq \bigl\{ s \in \mathbb{S}_{p,\boldsymbol{\tau}_{p,1}} : \partial^\alpha s(0) = \partial^\alpha s(1) = 0, \  0 \le \alpha \le p, \ \alpha \ \text{odd} \bigr\},
\end{equation}
where
\begin{equation*}
\boldsymbol{\tau}_{p,1} \coloneqq
\begin{cases}
\,\bigl\{0, \frac{1/2}{n}, \frac{3/2}{n}, \ldots, \frac{n-1/2}{n}, 1\bigr\}, & p \text{ odd},\\[3pt]
\,\bigl\{0, \frac{1}{n}, \frac{2}{n}, \ldots, \frac{n-1}{n}, 1\bigr\}, & p \text{ even};
\end{cases}
\end{equation*}
\item mixed boundary conditions:
\begin{equation}\label{S_p_2}
\begin{aligned}
\mathbb{S}_{p,2} \coloneqq \bigl\{ s \in \mathbb{S}_{p,\boldsymbol{\tau}_{p,2}} : \partial^{\alpha_0} s(0) = \partial^{\alpha_1} s(1) = 0,\ & \ 0 \le \alpha_0,\alpha_1 \le p,\ \\
& \ \alpha_0 \ \text{even}, \alpha_1 \ \text{odd} \bigr\},
\end{aligned}
\end{equation}
where
\begin{equation*}
\boldsymbol{\tau}_{p,2} \coloneqq
\begin{cases}
\,\bigl\{0, \frac{2}{2n+1}, \frac{4}{2n+1}, \ldots, \frac{2n}{2n+1}, 1\bigr\}, & p \text{ odd},\\[3pt]
\,\bigl\{0, \frac{1}{2n+1}, \frac{3}{2n+1}, \ldots, \frac{2n-1}{2n+1}, 1\bigr\}, & p \text{ even}.
\end{cases}
\end{equation*}
\end{itemize}
When taking $\mathbb{V}_h=\mathbb{S}_{p,i}$, $i=0,1,2$ in the Galerkin approach, accurate approximations are obtained for all the first $n$ eigenvalues and eigenfunctions of the Laplace operator, with boundary conditions of type $0,1,2$, respectively \cite{Manni2022}. For this reason, such spaces are called \emph{outlier-free}.

\begin{remark} \label{rmk:reduced-spaces}
Similar subspaces $\overline{\mathbb{S}}_{p,i}$, $i = 0, 1$, introduced for uniform partitions $\boldsymbol{\tau}$ in \cite{Sogn2019,Takacs2016} and further analyzed in \cite[Section~5.2]{Sande2020}, were considered for outlier removal in \cite{Hiemstra2021} (and also \cite{Lamsahel2025}). Compared to the optimal spaces $\mathbb{S}_{p,i}$ , $i = 0, 1$, these subspaces can slightly differ in the partition and in the maximum order of vanishing derivatives at the boundary, depending on the parity of the degree $p$. However, we have that $\mathbb{S}_{p,0} = \overline{\mathbb{S}}_{p,0}$ for $p$ odd and that $\mathbb{S}_{p,1} = \overline{\mathbb{S}}_{p,1}$ for $p$ even.
\end{remark}

It is worth mentioning that the eigenvalues of $M^{-1}_h K_h$ do not depend on the choice of the basis in the considered discretization space. On the contrary, those of the mass matrix $M_h$ and of the stiffness matrix $K_h$ clearly  depend on the selected basis. In the next section, we present special bases for the spaces $\mathbb{S}_{p,i}$, $i=0,1,2$, such that the corresponding mass and stiffness matrices enjoy particularly nice spectral properties.

\subsection{Outlier-Free Spline Bases}
The effective use of the spaces $\mathbb{S}_{p,i}$ is clearly tied to the possibility of representing their elements in terms of basis functions with pleasing properties, such as local support and non-negativity. In this regard, the following bases were proposed in \cite{Divona2019,Manni2022} (see also \cite{FloaterSande2019,Lamsahel2025}):
\begin{itemize}
\item
for $n \geq p + 1$, the basis $\mathscr{E}_{p,0}$ for the space $\mathbb{S}_{p,0}$ is given by the $n$ functions
\begin{equation}\label{Base_Eliseo}
\mathcal{E}_{j,p,0}(x) \coloneqq
\begin{cases}
 -\mathcal{N}_{q-j,p}(y) + \mathcal{N}_{q+j,p}(y),
 & j \in \left\{ 1, \ldots, \left\lfloor \frac{p}{2} \right\rfloor \right\}, \\[3pt]
 \mathcal{N}_{q+j,p}(y),
 & j \in \left\{ \left\lfloor \frac{p}{2} \right\rfloor + 1, \ldots,
 n - \left\lfloor \frac{p}{2} \right\rfloor \right\}, \\[3pt]
 \mathcal{N}_{q+j,p}(y) - \mathcal{N}_{q + 2(n+1) - j,p}(y),
 & j \in \left\{ n + 1 - \left\lfloor \frac{p}{2} \right\rfloor, \ldots, n \right\},
\end{cases}
\end{equation}
where
$q \coloneqq \left\lfloor \frac{p}{2} \right\rfloor - p$,
$y(x) \coloneqq (n+1)x$ if $p$ odd,
$y(x) \coloneqq (n+1)x + \frac{1}{2}$ if $p$ even;
\item for $n \geq 2p - 2\lfloor \frac{p}{2} \rfloor + 1$, the basis $\mathscr{E}_{p,1}$ for the space $\mathbb{S}_{p,1}$ is given by the $n$ functions
\begin{equation}\label{Base_Eliseo_2}
\mathcal{E}_{j,p,1}(x) \coloneqq
\begin{cases}
\mathcal{N}_{-\left\lfloor \frac{p}{2} \right\rfloor - j,p}(y)
+ \mathcal{N}_{-\left\lfloor \frac{p}{2} \right\rfloor + j - 1,p}(y),
& j \in \left\{ 1, \ldots, -q \right\}, \\[3pt]
\mathcal{N}_{-\left\lfloor \frac{p}{2} \right\rfloor + j - 1,p}(y),
& j \in \left\{ -q + 1, \ldots, n + q \right\}, \\[3pt]
\mathcal{N}_{-\left\lfloor \frac{p}{2} \right\rfloor + j - 1,p}(y)
+ \mathcal{N}_{-\left\lfloor \frac{p}{2} \right\rfloor + 2n - j,p}(y),
& j \in \left\{ n + q + 1, \ldots, n \right\},
\end{cases}
\end{equation}
where
$q \coloneqq \left\lfloor \frac{p}{2} \right\rfloor - p$,
$y(x) \coloneqq nx + \frac{1}{2}$ if $p$ odd,
$y(x) \coloneqq nx$ if $p$ even;
\item for $n \geq p + 1$, the basis $\mathscr{E}_{p,2}$ for the space $\mathbb{S}_{p,2}$ is given by the $n$ functions
\begin{equation}\label{Base_Eliseo_3}
\mathcal{E}_{j,p,2}(x) \coloneqq \mathcal{N}_{q+j,p}(y) + \gamma_j \mathcal{N}_{q-k_j,p}(y),
\qquad j = 1, \ldots, n,
\end{equation}
where
\begin{equation*}
(\gamma_j, k_j) \coloneqq
\begin{cases}
(-1, j), & j \in \left\{ 1, \ldots, \left\lfloor \frac{p}{2} \right\rfloor \right\}, \\[3pt]
(0, \cdot), & j \in \left\{ \left\lfloor \frac{p}{2} \right\rfloor + 1, \ldots, n + q \right\}, \\[3pt]
(1, j - (2n + 1)), & j \in \left\{ n + q + 1, \ldots, n \right\},
\end{cases}
\end{equation*}
and
$q \coloneqq \left\lfloor \frac{p}{2} \right\rfloor - p$,
$y(x) \coloneqq \frac{2n + 1}{2}\, x$ if $p$ odd,
$y(x) \coloneqq \frac{2n + 1}{2}\, x + \frac{1}{2}$ if $p$ even.
\end{itemize}

\begin{remark} \label{rmk:reduced-bases}
Similar basis constructions, defined in terms of cardinal B-splines, were also developed for the subspaces mentioned in Remark~\ref{rmk:reduced-spaces}. Such a basis for the space $\overline{\mathbb{S}}_{p,1}$ was introduced in \cite{Takacs2016} and for the space $\overline{\mathbb{S}}_{p,0}$ in \cite{Manni2022} (see also \cite{Lamsahel2025}). An alternative basis construction was proposed in \cite{Hiemstra2021}.
\end{remark}

By construction, the basis functions in \eqref{Base_Eliseo}--\eqref{Base_Eliseo_3} inherit the good computational properties of cardinal B-splines (and so of B-splines). Furthermore, they possess some remarkable spectral properties as detailed in the following.

Given a basis $\mathscr{B}$ of one of the discretization spaces \eqref{S_p_0}--\eqref{S_p_2}, we denote by
$M_{\mathscr{B}}$ and $K_{\mathscr{B}}$ the corresponding mass and stiffness matrices.
Recall that $1\leq p \in \mathbb{N}$.
For $r=0,1$ and $\theta \in [0,\pi]$, let
\begin{equation}\label{simboli}
g^r_p(\theta)
\coloneqq (-1)^r \mathcal{N}^{(2r)}_{2p+1}(p+1)
+ 2(-1)^r \sum_{k=1}^{p}
\mathcal{N}^{(2r)}_{2p+1}(p+1-k)\cos(k\theta).
\end{equation}
It is known that $g^0_p(\theta)>0$ for $\theta \in[0,\pi]$, and $g^1_p(\theta)>0$ for $\theta \in(0,\pi]$; see \cite{Garoni2014}. Moreover, their ratio
\begin{equation}\label{monotonia}
e_p(\theta) \coloneqq \frac{g^1_p(\theta)}{g^0_p(\theta)}
\end{equation}
is monotone increasing on $[0,\pi]$; see \cite{Ekstrom2018}.
The following theorems were shown in \cite{Lamsahel2025} for the bases $\mathscr{E}_{p,i}$, $i = 0,1,2$, introduced above.

\begin{theorem}\label{U}
Let $\mathscr{E}_{p,0}$ be as in \eqref{Base_Eliseo}. Assume
$n \geq \max\!\left\{ p+1,\; p+\left\lfloor \frac{p}{2} \right\rfloor - 1 \right\}$.
Then, $M_{\mathscr{E}_{p,0}}$ and $K_{\mathscr{E}_{p,0}}$ are simultaneously diagonalizable by the orthogonal matrix $U$ consisting of the elements
\begin{equation*}
U_{i,j} \coloneqq \sqrt{\frac{2}{n+1}} \sin\!\left( \frac{ij\pi}{n+1} \right),
\end{equation*}
and their eigenvalues are given by
\begin{equation*}
\lambda_j \bigl( M_{\mathscr{E}_{p,0}} \bigr)
= (n + 1)^{-1} g_p^{0}\!\left( \frac{j\pi}{n + 1} \right),
\quad
\lambda_j \bigl( K_{\mathscr{E}_{p,0}} \bigr)
= (n + 1) g_p^{1}\!\left( \frac{j\pi}{n + 1} \right),
\end{equation*}
for $i,j=1,\ldots, n$.
\end{theorem}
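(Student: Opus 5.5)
The plan is to show that $M_{\mathscr{E}_{p,0}}$ and $K_{\mathscr{E}_{p,0}}$ are tridiagonal-like \emph{Toeplitz-minus-Hankel} matrices, and that every such matrix of the right form is diagonalized by the discrete sine transform $U$. Write $h\coloneqq 1/(n+1)$. Since $y=(n+1)x+c$, a change of variables gives
\begin{equation*}
\int_0^1 \mathcal{N}_{a,p}(y)\mathcal{N}_{b,p}(y)\,\mathrm{d}x = h\int \mathcal{N}_{a,p}\mathcal{N}_{b,p}\,\mathrm{d}y = h\,\mathcal{N}_{2p+1}(p+1+a-b),
\end{equation*}
using the classical convolution identity for cardinal B-splines, provided the whole support lies inside the integration range. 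Similarly, for derivatives we get $(n+1)\,(-1)\mathcal{N}''_{2p+1}(p+1+a-b)$, after integrating by parts.

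The first difficulty is that the integration is only over $y\in[c,(n+1)+c]$, while the cardinal B-splines near the boundary stick out. I would handle this by the reflection symmetry built into \eqref{Base_Eliseo}. Extend each $\mathcal{E}_{j,p,0}$ (as a function of $y$) oddly about the left endpoint and about the right endpoint, hence $2(n+1)$-periodically. The combinations $-\mathcal{N}_{q-j,p}+\mathcal{N}_{q+j,p}$ are precisely the restrictions of odd reflections, because $\mathcal{N}_{q-j,p}$ is the mirror image of $\mathcal{N}_{q+j,p}$ about the endpoint; the choice of $q$ and of the half-shift for even $p$ is exactly what makes the centers symmetric.

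The products $\mathcal{E}_i\mathcal{E}_j$ and $\mathcal{E}_i'\mathcal{E}_j'$ are then even functions. Hence $\int_0^1$ equals half the integral over a symmetric period window. Expanding this in the unrestricted translates yields
\begin{equation*}
M_{i,j} = h\bigl(a_{i-j} - a_{i+j}\bigr), \qquad a_k \coloneqq \mathcal{N}_{2p+1}(p+1+k),
\end{equation*}
up to periodic wraparound terms $a_{2(n+1)-i-j}$. The bound $n\ge p+\lfloor p/2\rfloor-1$ is exactly what guarantees that no other wraparounds occur and that the two ends do not interact. The same computation applies to $K$ with $a_k$ replaced by $-(n+1)^2\mathcal{N}''_{2p+1}(p+1+k)$. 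This bookkeeping, with indices and the parity of $p$, is the main obstacle. I would verify it by treating the three index ranges of \eqref{Base_Eliseo} separately, checking that interior functions reproduce plain Toeplitz entries and boundary ones add exactly the Hankel correction.

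With the Toeplitz-minus-Hankel structure, whose symbol is the cosine polynomial $g_p^r$ from \eqref{simboli} with $a_k=a_{-k}$ and $a_k=0$ for $|k|>p$, the result follows from the standard identity
\begin{equation*}
\sum_{j}(a_{i-j}-a_{i+j}-a_{2(n+1)-i-j})\sin\frac{jl\pi}{n+1} = \Bigl(a_0+2\sum_{k\ge1}a_k\cos\frac{kl\pi}{n+1}\Bigr)\sin\frac{il\pi}{n+1}.
\end{equation*}
This is obtained by extending $\sin(jl\pi/(n+1))$ oddly in $j$ and $2(n+1)$-periodically, and then using $\sin(\alpha+\beta)+\sin(\alpha-\beta)=2\sin\alpha\cos\beta$. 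Thus each column of $U$ is an eigenvector, with eigenvalue $h\,g_p^0(l\pi/(n+1))$ for $M$ and $(n+1)g_p^1(l\pi/(n+1))$ for $K$.

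Orthogonality of $U$ is the classical DST-I orthogonality. Ordering by $j$ is consistent because the eigenvalues are labeled by the column index.
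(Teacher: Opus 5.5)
Your proposal is correct and follows essentially the route the paper relies on: the paper does not reprove this theorem but quotes it from \cite{Lamsahel2025}, attributing it to the Toeplitz-minus-Hankel ($\tau$-algebra) structure of $M_{\mathscr{E}_{p,0}}$ and $K_{\mathscr{E}_{p,0}}$, which is exactly what your odd $2(n+1)$-periodic reflection produces before the DST-I eigenvector identity finishes the argument. One small correction: your unfolding computation already gives $M_{i,j}=(n+1)^{-1}\bigl(a_{i-j}-a_{i+j}-a_{2(n+1)-i-j}\bigr)$ for every $n\ge p+1$, so the extra bound $n\ge p+\lfloor p/2\rfloor-1$ is not what rules out further wraparounds, and the left and right modified functions can in fact overlap under the stated hypothesis (e.g.\ $p=7$, $n=9$, entry $(3,7)$) without harm, since such entries are purely Toeplitz.
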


\begin{theorem}\label{U_2}
Let $\mathscr{E}_{p,1}$ be as in \eqref{Base_Eliseo_2}. Assume
$n \geq \max\!\left\{ 2p-\left\lfloor \frac{p}{2} \right\rfloor,\;
2p-2\left\lfloor \frac{p}{2} \right\rfloor + 1 \right\}$.
Then, $M_{\mathscr{E}_{p,1}}$ and $K_{\mathscr{E}_{p,1}}$ are simultaneously diagonalizable by the orthogonal matrix $U$ consisting of the elements
\begin{equation*}
U_{i,j}
\coloneqq \sqrt{\frac{2}{n}}\, c_j
\cos\!\left( \frac{(j-1)\pi}{n}
\left(i - \frac{1}{2}\right) \right), \quad \text{ with } \quad c_j \coloneqq
\begin{cases}
\frac{1}{\sqrt{2}}, & j = 1, \\[3pt]
1, & j \geq 2,
\end{cases}
\end{equation*}
and their eigenvalues are given by
\begin{equation*}
\lambda_j \bigl( M_{\mathscr{E}_{p,1}} \bigr)
= n^{-1} g_p^{0}\!\left( \frac{(j-1)\pi}{n} \right),
\quad
\lambda_j \bigl( K_{\mathscr{E}_{p,1}} \bigr)
= n g_p^{1}\!\left( \frac{(j-1)\pi}{n} \right),
\end{equation*}
for $i,j=1,\ldots, n$.
\end{theorem}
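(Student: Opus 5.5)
Since the statement is quoted from \cite{Lamsahel2025}, I would reprove it by computing the entries of $M_{\mathscr{E}_{p,1}}$ and $K_{\mathscr{E}_{p,1}}$ explicitly and showing that they have a Toeplitz-plus-Hankel structure of the kind diagonalized by the DCT-II matrix $U$. The key tool is the standard convolution identity for cardinal B-splines,
\begin{equation*}
\int_{\mathbb{R}} \mathcal{N}_p(y-a)\,\mathcal{N}_p(y-b)\,\mathrm{d}y = \mathcal{N}_{2p+1}(p+1+a-b),
\qquad
\int_{\mathbb{R}} \mathcal{N}_p'(y-a)\,\mathcal{N}_p'(y-b)\,\mathrm{d}y = -\mathcal{N}^{(2)}_{2p+1}(p+1+a-b).
\end{equation*}
After the change of variables $y=nx(+\tfrac12)$, the mass entries pick up a factor $n^{-1}$ and the stiffness entries a factor $n$. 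These factors account for the scalings $n^{\mp1}$ in the eigenvalue formulas.

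The main difficulty is that the integrals in \eqref{stiff-mass} are over $[0,1]$, not over $\mathbb{R}$. I would handle this by an even reflection argument. The basis functions in \eqref{Base_Eliseo_2} are built exactly so that each $\mathcal{E}_{j,p,1}$ is the restriction to $[0,1]$ of the periodized, symmetrized function
\begin{equation*}
\Phi_j(y) \coloneqq \sum_{\ell\in\mathbb{Z}} \Big(\mathcal{N}_{-\lfloor p/2\rfloor+j-1+2n\ell,p}(y) + \mathcal{N}_{-\lfloor p/2\rfloor-j+2n\ell,p}(y)\Big).
\end{equation*}
This function is even about $y=0$ and about $y=n$. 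For $p$ even the second translate is the mirror image of the first about $0$, since $\mathcal{N}_p(-y)=\mathcal{N}_p(y-(p+1))$. The assumption $n\ge 2p-\lfloor p/2\rfloor$ ensures that on $[0,n]$ only the listed terms are nonzero, so the functions in \eqref{Base_Eliseo_2} are recovered.

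With this representation, for $f,g$ even and $2n$-periodic we have $\int_0^n fg=\tfrac12\int_{-n}^{n} fg$. Expanding one factor as a sum over a single period turns the integral into a sum over $\mathbb{Z}$ of full-line products. This gives
\begin{equation*}
(M_{\mathscr{E}_{p,1}})_{i,j} = n^{-1}\sum_{\ell\in\mathbb{Z}}\Big(a_{i-j+2n\ell}+a_{i+j-1+2n\ell}\Big),
\qquad a_k \coloneqq \mathcal{N}_{2p+1}(p+1-k),
\end{equation*}
and the same formula for $K$ with $a_k$ replaced by $-\mathcal{N}^{(2)}_{2p+1}(p+1-k)$ and $n^{-1}$ replaced by $n$. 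So both matrices are Toeplitz-plus-Hankel with the Neumann-type boundary pattern $i+j-1$.

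To finish, I would verify directly that $\mathbf{u}_j$ with $(\mathbf{u}_j)_i=\cos\big((j-1)\pi(i-\tfrac12)/n\big)$ is an eigenvector. For $\theta=(j-1)\pi/n$, apply the product-to-sum formula to $\sum_{m=1}^n\big(a_{i-m}+a_{i+m-1}\big)\cos(\theta(m-\tfrac12))$ after folding the periodic sum. The Hankel part reflects the index range $1..n$ onto $0,-1,\dots,-n+1$, and $2n$-periodicity in $\theta$-multiples fills in $\mathbb{Z}$. The result is $\cos(\theta(i-\tfrac12))\sum_{k}a_k\cos(k\theta)$, whose trigonometric sum equals $g^r_p(\theta)$ by \eqref{simboli}, since $a_k$ is symmetric and vanishes for $|k|>p$.

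Orthogonality of $U$ with the normalization $c_j$ is the standard DCT-II identity. This makes the eigenvectors common to $M_{\mathscr{E}_{p,1}}$ and $K_{\mathscr{E}_{p,1}}$, so the two matrices are simultaneously diagonalizable with the stated eigenvalues.

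The main obstacle is bookkeeping. I need to check carefully, for both parities of $p$, that the index shifts $-\lfloor p/2\rfloor$ and the offset $y=nx+\tfrac12$ make the reflections land exactly at $i+j-1$, rather than at $i+j$ or $i+j-2$. I also need to confirm that the lower bound on $n$ keeps the wrap-around terms $\pm2n$ from interacting beyond the single reflection.
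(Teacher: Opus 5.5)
Your argument is correct and follows the route the paper points to: the paper does not reprove this theorem but imports it from \cite{Lamsahel2025}, where it rests on the same Toeplitz-plus-Hankel structure (with reflection index $i+j-1$) that you derive through the even $2n$-periodic extension and then diagonalize with the DCT-II matrix $U$. Two bookkeeping slips need fixing: the reflection identity is $\mathcal{N}_p(-y)=\mathcal{N}_p(y+p+1)$, not $\mathcal{N}_p(y-(p+1))$, and for $p$ odd the symmetry centres of $\Phi_j$ are $y=\tfrac12$ and $y=n+\tfrac12$ (that is, $x=0,1$) rather than $y=0,n$; neither slip affects the Hankel index $i+j-1$, since the differences between translates do not depend on $\lfloor p/2\rfloor$.
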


\begin{theorem}\label{U_3}
Let $\mathscr{E}_{p,2}$ be as in \eqref{Base_Eliseo_3}. Assume
$n \geq \max\!\left\{ p+1,\; p + \left\lfloor \frac{p}{2} \right\rfloor \right\}$.
Then, $M_{\mathscr{E}_{p,2}}$ and $K_{\mathscr{E}_{p,2}}$ are simultaneously diagonalizable by the orthogonal matrix $U$ consisting of the elements
\begin{equation*}
U_{i,j}
\coloneqq \sqrt{\frac{4}{2n+1}}
\sin\!\left( \frac{i(2j-1)\pi}{2n+1} \right),
\end{equation*}
and their eigenvalues are given by
\begin{equation*}
\lambda_j \bigl( M_{\mathscr{E}_{p,2}} \bigr)
= \left( \frac{2n+1}{2} \right)^{-1}
g^0_p\!\left( \frac{(2j-1)\pi}{2n+1} \right),
\quad
\lambda_j \bigl( K_{\mathscr{E}_{p,2}} \bigr)
= \left( \frac{2n+1}{2} \right)
g^1_p\!\left( \frac{(2j-1)\pi}{2n+1} \right),
\end{equation*}
for $i,j=1,\ldots, n$.
\end{theorem}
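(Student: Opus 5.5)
The statement to be proved is Theorem~\ref{U_3}. It concerns the mixed-boundary basis $\mathscr{E}_{p,2}$: the matrix $U$ simultaneously diagonalizes $M_{\mathscr{E}_{p,2}}$ and $K_{\mathscr{E}_{p,2}}$, with the stated eigenvalues.

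My plan is to write both matrices in Toeplitz-minus/plus-Hankel form and read off the eigenvectors from a discrete sine transform. Set $h\coloneqq 2/(2n+1)$, so that $y=x/h$ (shifted by $1/2$ when $p$ is even). Every basis function $\mathcal{E}_{j,p,2}$ is a combination of cardinal B-splines $\mathcal{N}_{t,p}(y)$. The inner products of two such translates on $\mathbb{R}$ are
\[
\int_{\mathbb{R}} \mathcal{N}_{p}(y-a)\mathcal{N}_{p}(y-b)\,\mathrm{d}y=\mathcal{N}_{2p+1}(p+1+a-b),
\]
and the derivative products are given by $-\mathcal{N}''_{2p+1}$ at the same point.

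The first step is to remove the truncation to $[0,1]$. The reflected translates appearing in $\mathcal{E}_{j,p,2}$ are chosen so that each basis function, extended beyond $[0,1]$, is \emph{odd about $x=0$} and \emph{even about $x=1$}.
\begin{itemize}
\item For $j\le\lfloor p/2\rfloor$, the term $-\mathcal{N}_{q-j,p}$ is exactly the mirror image of $\mathcal{N}_{q+j,p}$ about the point $y(0)$.
\item For $j\ge n+q+1$, the term $+\mathcal{N}_{q-j+2n+1,p}$ is the mirror image about $y(1)$.
\end{itemize}
I will check both identities directly using $\mathcal{N}_p(x)=\mathcal{N}_p(p+1-x)$.

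This symmetry lets me replace each integral over $[0,1]$ by a suitable multiple of an integral of the periodized antisymmetric extensions over one period $[-2,2]$. The whole function space then embeds in a space of $4$-periodic spline functions, built from sums $\sum_{k}\epsilon_k\mathcal{N}_{p}(y-t_k)$ over the images of the original translate under the reflection group. This is where the hypothesis $n\ge p+\lfloor p/2\rfloor$ enters: it ensures that the reflected pieces near $0$ and near $1$ do not overlap, so the boundary corrections are exactly as described.

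The resulting entries take the form
\[
M_{i,j}=h\bigl(a_{i-j}-a_{i+j+c}\pm\dots\bigr),\qquad a_k\coloneqq\mathcal{N}_{2p+1}(p+1-k).
\]
That is, $M$ is a banded symmetric Toeplitz matrix with symbol $g^0_p$, minus a Hankel correction in the upper-left corner and plus a Hankel correction in the lower-right corner. The matrix $K$ has the same structure with $h^{-1}$ in place of $h$ and symbol $g^1_p$.

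The second step is a direct verification. Write $\phi_j\coloneqq(2j-1)\pi/(2n+1)$ and let $\mathbf u_j$ be the vector with entries $\sin(i\phi_j)$. I will show that $M\mathbf u_j=h\,g^0_p(\phi_j)\mathbf u_j$. Extend $\mathbf u_j$ to all integers $i$. It is odd about $i=0$, since $\sin(-i\phi)=-\sin(i\phi)$. It is also even about $i=n+\tfrac12$, since
\[
\sin\bigl((2n+1-i)\phi_j\bigr)=\sin\bigl((2j-1)\pi-i\phi_j\bigr)=\sin(i\phi_j).
\]
With this extension, the Hankel corrections are exactly the terms that fold the bi-infinite Toeplitz action back into the index range $1,\dots,n$. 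The bi-infinite Toeplitz operator acts on $\sin(i\phi)$ by multiplication by $g(\phi)$, via the identity
\[
\sum_k a_k\sin((i-k)\phi)=\sin(i\phi)\Bigl(a_0+2\sum_{k\ge1}a_k\cos k\phi\Bigr).
\]
This yields the eigenvalue $h\,g^0_p(\phi_j)$, and since $h=(2n+1)^{-1}\cdot 2$ this is $\bigl(\tfrac{2n+1}{2}\bigr)^{-1}g^0_p(\phi_j)$. The same computation applies to $K$ with $h^{-1}g^1_p$.

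Orthogonality of $U$ with normalization $\sqrt{4/(2n+1)}$ follows from the standard DST-type identity $\sum_{i=1}^n\sin(i\phi_j)\sin(i\phi_k)=\tfrac{2n+1}{4}\delta_{jk}$. I will prove it by extending the sum symmetrically to a full period and using the geometric sum of the exponentials $e^{\mathrm{i}\,m\pi/(2n+1)}$.

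The main obstacle is the first step. I must get the index bookkeeping exactly right: the offset $q$, the half-shift when $p$ is even, and the fact that the left mirror point falls on an integer while the right one falls on a half-integer. Only then do the boundary Hankel terms match the extension of $\mathbf u_j$ (odd at $0$, even at $n+\tfrac12$). I would first verify this on small cases, $p=1,2,3$, to fix the offsets, and then write the general argument through the reflection identities of the cardinal B-spline.
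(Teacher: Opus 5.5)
Your proposal is correct, and it follows the route the paper relies on. The paper gives no proof of Theorem~\ref{U_3} itself: it cites \cite{Lamsahel2025} and attributes the result to the Toeplitz-minus/plus-Hankel structure of $M_{\mathscr{E}_{p,2}}$ and $K_{\mathscr{E}_{p,2}}$. Your unfolding argument produces that structure explicitly, with entries $\bigl(\tfrac{2n+1}{2}\bigr)^{-1}\bigl(a_{i-j}-a_{i+j}+a_{2n+1-i-j}\bigr)$, so your offset $c$ is $0$; the same holds for the stiffness matrix with $-\mathcal{N}''_{2p+1}(p+1-k)$ and factor $\tfrac{2n+1}{2}$. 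After that, the odd/even extension of $\sin(i\phi_j)$ gives the eigenpairs exactly as you describe.

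One small remark: the non-overlap of the two boundary foldings that your argument needs already holds for $n\ge p+1$. So the extra $\lfloor p/2\rfloor$ in the hypothesis is not actually where anything would break.
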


Theorems~\ref{U} to \ref{U_3} and the monotonicity of the function \eqref{monotonia} directly lead to the following corollary.
\begin{corollary}\label{Corolla0}
For $i = 0,1,2$, the generalized eigenvalues of
$K_{\mathscr{E}_{p,i}}\mathbf{u} = \lambda M_{\mathscr{E}_{p,i}}\mathbf{u}$ are given by the ratios
$
{\lambda_j \bigl( K_{\mathscr{E}_{p,i}} \bigr)}/
{\lambda_j \bigl( M_{\mathscr{E}_{p,i}} \bigr)}$,
$ j =1,\dots,n$,
and they are all distinct.
\end{corollary}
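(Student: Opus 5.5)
Since $U$ in Theorems~\ref{U}--\ref{U_3} is orthogonal and simultaneously diagonalizes both matrices, I will write $M_{\mathscr{E}_{p,i}} = U D_M U^T$ and $K_{\mathscr{E}_{p,i}} = U D_K U^T$. Here $D_M$ and $D_K$ are the diagonal matrices whose $j$-th entries are the eigenvalues listed in those theorems, all strictly positive for $D_M$. The generalized eigenproblem $K_{\mathscr{E}_{p,i}}\mathbf{u}=\lambda M_{\mathscr{E}_{p,i}}\mathbf{u}$ then becomes $D_K \mathbf{w} = \lambda D_M \mathbf{w}$ with $\mathbf{w}=U^T\mathbf{u}$, because $U$ is invertible. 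This diagonal problem has the eigenpairs $(\lambda_j(K)/\lambda_j(M), \mathbf{e}_j)$, $j=1,\dots,n$. There are $n$ of them, counted with multiplicity, since $D_M^{-1}D_K$ is diagonal. So the generalized eigenvalues are exactly these ratios, and the corresponding eigenvectors are the columns of $U$.

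To show the ratios are distinct, I use the common scaling. In each case the ratio equals $h^{2}\, e_p(\theta_j)$, where $h$ is $n+1$, $n$ or $(2n+1)/2$ respectively, $e_p$ is the function in \eqref{monotonia}, and the grid points are
\[
\theta_j=\frac{j\pi}{n+1},\qquad \theta_j=\frac{(j-1)\pi}{n},\qquad \theta_j=\frac{(2j-1)\pi}{2n+1}.
\]
In each case the $\theta_j$ are strictly increasing in $j$ and lie in $[0,\pi]$, and $g^0_p>0$ there, so $e_p$ is well defined at every grid point.

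The main point is that monotonicity of $e_p$ must be \emph{strict} to give distinctness. I would read the cited result of \cite{Ekstrom2018} as strict increase; if only weak monotonicity were available, I would argue as follows. Both $g^1_p$ and $g^0_p$ are trigonometric polynomials, so $e_p$ is real-analytic on $[0,\pi]$. A real-analytic nondecreasing function that is not strictly increasing is constant on some interval, hence constant on all of $[0,\pi]$. That is impossible, because $e_p(0)=0$: here $g^1_p(0)=-\sum_k \mathcal{N}''_{2p+1}(k)=0$ by partition of unity. On the other hand $e_p(\pi)>0$, since $g^1_p>0$ on $(0,\pi]$.

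Hence $e_p$ is strictly increasing, so $\lambda_j(K)/\lambda_j(M)=h^2 e_p(\theta_j)$ is strictly increasing in $j$. All $n$ generalized eigenvalues are therefore distinct, and they come already in ascending order.
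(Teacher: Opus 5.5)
Your proposal is correct and follows the paper's route. The paper obtains the corollary directly from the simultaneous diagonalization in Theorems~\ref{U}--\ref{U_3}, combined with the monotonicity of $e_p$ at the strictly increasing sample points $\theta_j$. Your extra real-analyticity argument, using $e_p(0)=0<e_p(\pi)$ to upgrade weak monotonicity to strict, is a sound safeguard for a step the paper leaves implicit in its citation of \cite{Ekstrom2018}.
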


\begin{remark}
The precise spectral knowledge of the matrices $M_{\mathscr{E}_{p,i}}$ and $K_{\mathscr{E}_{p,i}}$, $i = 0,1,2$, follows from the fact that they exhibit a Toeplitz-minus-Hankel or Toeplitz-plus-Hankel structure \cite{Lamsahel2025}.
Such structured matrices belong to certain $\tau$ matrix algebras, whose (spectral) study dates back to the works \cite{Bini1983,Bozzo1995,Difiore1995}. These structures have also been investigated more recently in \cite{Deng2021}.
For the discretization matrices related to standard (full) spline spaces, such structure is only available in few specific cases of low degree~\cite{Ekstrom2018}.
\end{remark}

The relevance of the above mentioned spectral properties of the bases in \eqref{Base_Eliseo}--\eqref{Base_Eliseo_3} is twofold, involving two different meanings of the notion \emph{outlier-free}.
\begin{enumerate}
\item For $i=0,1,2$, the matrices $M_{\mathscr{E}_{p,i}}$ and $K_{\mathscr{E}_{p,i}}$ admit a closed-form description of their spectra, i.e., all their eigenvalues are known samples, up to a scaling, of the functions $g_p^0$ and $g_p^1$ in \eqref{simboli}, which are the symbols of $M_{\mathscr{E}_{p,i}}$ and $K_{\mathscr{E}_{p,i}}$, respectively \cite{Garoni2014}. For this reason, the basis $\mathscr{E}_{p,i}$ is called \emph{outlier-free}. Note that the functions $g_p^0$ and $g_p^1$ are independent of the index $i$, i.e., of the type of boundary
conditions.
\item For $i=0,1,2$, the matrices $M_{\mathscr{E}_{p,i}}$ and $K_{\mathscr{E}_{p,i}}$ are simultaneously diagonalizable, i.e., the eigenvectors of the stiffness matrix all coincide with the eigenvectors of the mass matrix -- and so with those of $M^{-1}_{\mathscr{E}_{p,i}} K_{\mathscr{E}_{p,i}}$. This property ensures that the generalized eigenvalues of
$K_{\mathscr{E}_{p,i}}\mathbf{u} = \lambda M_{\mathscr{E}_{p,i}}\mathbf{u}$ agree with the ratios $
{\lambda_j \bigl( K_{\mathscr{E}_{p,i}} \bigr)}/
{\lambda_j \bigl( M_{\mathscr{E}_{p,i}} \bigr)}$. By taking into account the known explicit expressions of ${\lambda_j \bigl( M_{\mathscr{E}_{p,i}} \bigr)}$ and $ {\lambda_j \bigl( K_{\mathscr{E}_{p,i}} \bigr)}$, it implies that the corresponding space $\mathbb{S}_{p,i}$ is \emph{outlier-free} as discussed in \cite{Lamsahel2025}.
\end{enumerate}
This raises the question whether there are other bases of the space $\mathbb{S}_{p,i}$, $i=0,1,2$, enjoying the same (or very similar) spectral properties and still maintaining a B-spline-like structure (with properties such as local support and non-negativity). This will be investigated in the next section.

\section{Outlier-Free and B-Spline-Like Bases for Outlier-Free Spaces}\label{sec:4}
This section contains the main result of the paper, which is stated in Theorem~\ref{TEOREMA}. We show that any basis $\mathscr{B}_{p,i}$ of the space $\mathbb{S}_{p,i}$, $i=0,1,2$,
such that
\begin{enumerate}
\item the elements of $\mathscr{B}_{p,i}$  have the same support structure as those of $\mathscr{E}_{p,i}$,
\item the corresponding mass and stiffness matrices are simultaneously diagonalizable,
\end{enumerate}
can be obtained from the basis $\mathscr{E}_{p,i}$ by a change of basis identified by an orthogonal matrix $R_{\mathscr{B}_{p,i}}$. This implies (see Corollary~\ref{Basi_outlierfree})
that the mass and stiffness matrices with respect to the basis
$\mathscr{B}_{p,i}$ share the same spectrum as the corresponding  matrices
with respect to the basis $\mathscr{E}_{p,i}$, and thus they are outlier-free as well.

\subsection{Main Result}\label{caratterizzazione}
Before we are able to formulate our main result, we need a precise definition of the family of bases of $\mathbb{S}_{p,i}$ we are interested in. To this end, we start by recalling a well-known linear algebra result; see, e.g., \cite{HornJohnson2013}.
\begin{lemma}\label{Horn}
Let $A$ and $B$ be diagonalizable matrices in $\mathbb{R}^{n \times n}$. Then, $A$ and $B$ commute if and only if they are simultaneously diagonalizable.
\end{lemma}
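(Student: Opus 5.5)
The statement to prove is Lemma~\ref{Horn}: two diagonalizable matrices $A,B\in\mathbb{R}^{n\times n}$ commute if and only if they are simultaneously diagonalizable. The plan treats the two directions separately. The easy direction comes first. Suppose there is an invertible $S$ with $S^{-1}AS=D_A$ and $S^{-1}BS=D_B$, both diagonal. Diagonal matrices commute, so
\begin{equation*}
AB = S D_A D_B S^{-1} = S D_B D_A S^{-1} = BA .
\end{equation*}

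For the converse, assume $AB=BA$ and use the eigenspace decomposition of $A$.

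Since $A$ is diagonalizable, the space splits as $\mathbb{R}^n=\bigoplus_{k} E_{\mu_k}$, where $\mu_1,\dots,\mu_r$ are the distinct eigenvalues of $A$ and $E_{\mu_k}=\ker(A-\mu_k I)$. Here I assume the eigenvalues are real, which is the case relevant to the paper, since mass and stiffness matrices are symmetric. In full generality one works over $\mathbb{C}$ instead, and the argument is unchanged.

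The first key step is that each eigenspace is $B$-invariant. If $v\in E_{\mu_k}$, then
\begin{equation*}
A(Bv)=B(Av)=\mu_k Bv ,
\end{equation*}
so $Bv\in E_{\mu_k}$.

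The second key step, and the main technical point, is that the restriction $B|_{E_{\mu_k}}$ is itself diagonalizable. I would prove this through the minimal polynomial. Because $B$ is diagonalizable, its minimal polynomial $m_B$ has distinct linear factors. The restriction of $B$ to an invariant subspace is annihilated by $m_B$. Hence the minimal polynomial of the restriction divides $m_B$, so it also has distinct linear factors, and the restriction is diagonalizable.

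The final step assembles the basis. Choose in each $E_{\mu_k}$ a basis consisting of eigenvectors of $B|_{E_{\mu_k}}$. Every such vector is automatically an eigenvector of $A$, with eigenvalue $\mu_k$. The union of these bases over $k$ is a basis of $\mathbb{R}^n$, because the eigenspaces form a direct sum. Let $S$ be the matrix whose columns are these vectors. Then both $S^{-1}AS$ and $S^{-1}BS$ are diagonal.

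One refinement is worth recording for the later use in the paper. If $A$ and $B$ are symmetric, each eigenspace $E_{\mu_k}$ can be given an orthonormal eigenbasis of the symmetric restriction $B|_{E_{\mu_k}}$. Eigenspaces of a symmetric matrix for distinct eigenvalues are mutually orthogonal, so the assembled $S$ is then an orthogonal matrix.
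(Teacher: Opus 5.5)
Your proof is correct, and it is essentially the paper's approach: the paper gives no argument of its own and cites Horn--Johnson, whose proof is your eigenspace argument (diagonalize $A$ with repeated eigenvalues grouped, so $B$ becomes block diagonal, then diagonalize each block), except that there the blocks' diagonalizability comes from a lemma on direct sums rather than from the minimal polynomial. Your orthogonal refinement for symmetric matrices is also the form in which the paper uses the result for $M_{\mathscr{B}}$ and $K_{\mathscr{B}}$.
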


We observe that for a basis $\mathscr{B}$, the corresponding mass matrix $M_{\mathscr{B}}$ and
stiffness matrix $K_{\mathscr{B}}$ are symmetric and thus diagonalizable by orthogonal transformations. This means that, according to Lemma~\ref{Horn}, they are simultaneously diagonalizable if and only if their commutator vanishes, namely
\begin{equation*}
[M_{\mathscr{B}}, K_{\mathscr{B}}]
\coloneqq M_{\mathscr{B}} K_{\mathscr{B}} - K_{\mathscr{B}} M_{\mathscr{B}} = 0.
\end{equation*}
In this perspective, we consider bases $\mathscr{B}_{p,i}$ of $\mathbb{S}_{p,i}$ satisfying
$[M_{\mathscr{B}_{p,i}}, K_{\mathscr{B}_{p,i}}] = 0$.

Furthermore, we want that the basis elements have the same support structure as those of $\mathscr{E}_{p,i}$.
More precisely, from \eqref{Base_Eliseo}--\eqref{Base_Eliseo_3}, we deduce the following technical lemma.
\begin{lemma}
For $i = 0,1,2$, let
$\mathscr{B}_{p,\boldsymbol{\tau}_{p,i}}$ be the open-knot B-spline basis of $\mathbb{S}_{p,\boldsymbol{\tau}_{p,i}}$; see \eqref{osaitu}. Then,
\begin{equation*}
\mathscr{E}_{p,i} = A_{\mathscr{E}_{p,i}}\,\mathscr{B}_{p,\boldsymbol{\tau}_{p,i}},
\end{equation*}
where $A_{\mathscr{E}_{p,i}}\in\mathbb{R}^{n \times n_i}$, with
$n_i = n + p + 1$ if $p$ is odd, for all $i = 0,1,2$, and $n_0 = n + p + 2$, $n_1 = n + p$ and $n_2 = n + p + 1$ if $p$ is even.
Moreover, the matrix has the following block structure:
\begin{equation}\label{A_E}
A_{\mathscr{E}_{p,i}}
=
\begin{bmatrix}
A^i_{11} & 0 & 0 \\ 
0 & I_{r_i \times r_i} & 0 \\ 
0 & 0 & A^i_{33}
\end{bmatrix},
\quad
A_{11}^i \in \mathbb{R}^{d_{1,i} \times d_{2,i}},
\quad
A_{33}^i \in \mathbb{R}^{d_{3,i} \times d_{4,i}},
\end{equation}
where for $p$ odd:
\begin{itemize}
\item $i = 0$: $d_{1,i} = d_{3,i} = \left\lfloor \frac{p}{2} \right\rfloor$ and $d_{2,i} = d_{4,i} = p$,
\item $i = 1$: $d_{1,i} = d_{3,i} = \left\lceil \frac{p}{2} \right\rceil$ and $d_{2,i} = d_{4,i} = p + 1$,
\item $i = 2$: $d_{1,i} = \left\lfloor \frac{p}{2} \right\rfloor$, $d_{3,i} = \left\lceil \frac{p}{2} \right\rceil$, and $d_{2,i} = p, d_{4,i} = p + 1,$
\end{itemize}
and for $p$ even:
\begin{itemize}
\item $i = 0$: $d_{1,i} = d_{3,i} = \left\lfloor \frac{p}{2} \right\rfloor$ and $d_{2,i} = d_{4,i} = p + 1$,
\item $i = 1$: $d_{1,i} = d_{3,i} = \left\lceil \frac{p}{2} \right\rceil$ and $d_{2,i} = d_{4,i} = p $,
\item $i = 2$: $d_{1,i} = \left\lfloor \frac{p}{2} \right\rfloor$, $d_{3,i} = \left\lceil \frac{p}{2} \right\rceil$, and $d_{2,i} = p + 1, d_{4,i} = p $.
\end{itemize}
The blocks $A^i_{11}$ and $A^i_{33}$ have full rank, for all $i = 0,1,2$.
\end{lemma}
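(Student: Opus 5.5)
The lemma follows from identifying the cardinal B-spline translates in \eqref{Base_Eliseo}--\eqref{Base_Eliseo_3} with open-knot B-splines. Fix $i$ and the parity of $p$. The first step is to write out the knot sequence \eqref{osaitu} for $\boldsymbol{\tau}_{p,i}$ in the variable $y$. For every parity and every $i$, the change of variable $y(x)$ maps the interior breakpoints of $\boldsymbol{\tau}_{p,i}$ onto consecutive integers. For example, for $i=0$ and $p$ odd we have $y=(n+1)x$, so $\tau_k=(k-1)/(n+1)$ is sent to $k-1$. In the other cases the shifts $\frac12$ are chosen precisely so that half-integer breakpoints become integers. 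Hence $\mathbb{S}_{p,\boldsymbol{\tau}_{p,i}}$ is obtained by restricting to $[0,1]$ the cardinal splines of degree $p$ on the integer grid $\mathbb{Z}$ in $y$.

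The second step is to count the translates. Let $N_i$ be the number of translates $\mathcal{N}_{t,p}(y)$, $t\in\mathbb{Z}$, whose support $[t,t+p+1]$ meets the open image interval $(y(0),y(1))$. Their restrictions to $[0,1]$ form a basis of $\mathbb{S}_{p,\boldsymbol{\tau}_{p,i}}$, since they are the B-splines of an extended integer knot sequence restricted to the domain. This gives $n_i=N_i$ equal to $m+p$ with $m=|\boldsymbol{\tau}_{p,i}|-1$. Counting elements, $m=n+1$ for $(i,p)=(0,\text{odd})$, $n+2$ for $(0,\text{even})$, $n+1$ for $(1,\text{odd})$, $n$ for $(1,\text{even})$, and $n+1$ in both cases for $i=2$. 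This reproduces the stated values of $n_i$.

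The third step is to relate the two bases and read off the blocks. Near each endpoint, the restricted cardinal translates and the open-knot B-splines $\mathscr{B}_{p,\boldsymbol{\tau}_{p,i}}$ are related by a local knot-insertion (de Boor--Fix / Oslo) transformation. The first $p$ or $p+1$ translates (those overlapping $y(0)$) are combinations of the first $d_{2,i}$ open-knot B-splines. Interior translates whose support lies inside $(y(0),y(1))$ coincide exactly with interior open-knot B-splines. The same holds symmetrically at the right end, with $d_{4,i}$ functions.

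With this correspondence, each $\mathcal{E}_{j,p,i}$ is expanded using \eqref{Base_Eliseo}--\eqref{Base_Eliseo_3}:
\begin{itemize}
\item for the middle indices, $\mathcal{E}_{j,p,i}$ is a single interior translate, giving the identity block $I_{r_i\times r_i}$;
\item for the first $\lfloor p/2\rfloor$ (respectively $-q=\lceil p/2\rceil$) indices, the functions involve only translates overlapping the left end, giving $A^i_{11}$;
\item similarly at the right end, giving $A^i_{33}$.
\end{itemize}
The sizes $d_{1,i},d_{3,i}$ are read directly from the index ranges in the definitions. For $i=2$ the left end follows the Dirichlet-type pattern ($\lfloor p/2\rfloor$) and the right end the Neumann-type pattern ($\lceil p/2\rceil$). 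Then $r_i=n-d_{1,i}-d_{3,i}$, and one checks that $d_{2,i}+r_i+d_{4,i}=n_i$. The main obstacle I expect is this bookkeeping: verifying in all six cases that the boundary translates involve exactly $d_{2,i}$ (respectively $d_{4,i}$) open-knot B-splines and that no interior translate straddles a block boundary. I would do this by listing the supports of the translates in $y$ for each case and using $n\ge p+1$ (or the analogous assumption) to ensure the two ends do not interact.

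Finally, the full-rank claim follows without computing $A^i_{11}$ explicitly. The $n$ functions of $\mathscr{E}_{p,i}$ form a basis, hence are linearly independent. So $A_{\mathscr{E}_{p,i}}$ has full row rank $n$, and by its block-diagonal structure each diagonal block must have full row rank. Since $d_{1,i}\le d_{2,i}$ and $d_{3,i}\le d_{4,i}$, this is precisely full rank of $A^i_{11}$ and $A^i_{33}$.
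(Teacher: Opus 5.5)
Your proposal is correct and is essentially the verification the paper has in mind. The paper gives no separate argument beyond saying the lemma is deduced from \eqref{Base_Eliseo}--\eqref{Base_Eliseo_3}, and your bookkeeping checks out in all six cases: integer interior knots in $y$, $p$ versus $p+1$ boundary translates according to whether $y(0)$ and $y(1)$ are integers or half-integers, $d_{2,i}+r_i+d_{4,i}=n_i$, and full rank of $A^i_{11}$, $A^i_{33}$ from the linear independence of $\mathscr{E}_{p,i}$ plus the rank count of a block-diagonal matrix. The only wording to tighten is that the identity block comes from translates whose support lies in the closed interval $[y(0),y(1)]$, not the open one; for example, $\mathcal{N}_{0,p}$ is the $(p+1)$-th open-knot B-spline when $y(0)=0$.
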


The open-knot B-spline basis $\mathscr{B}_{p,\boldsymbol{\tau}_{p,i}}$  is the basis with the most compact support of its elements for the space $\mathbb{S}_{p,\boldsymbol{\tau}_{p,i}}$, which induces
a pronounced sparsity structure in the mass and stiffness matrices. Due to the block structure of the matrix $A_{\mathscr{E}_{p,i}}$, the basis $\mathscr{E}_{p,i}$ mimics these features and modifies only those elements of the open-knot B-spline basis that are affected by the additional boundary conditions characterizing the space $\mathbb{S}_{p,i}$; see \eqref{S_p_0}--\eqref{S_p_2}.
It is therefore natural to consider other bases of the space $\mathbb{S}_{p,i}$ that preserve the same sparsity and support structure, i.e., bases that can be expressed in terms of the open-knot B-spline basis through a matrix having the same block structure as the matrix $A_{\mathscr{E}_{p,i}}$ in \eqref{A_E}.

We are now ready to define the family of bases of $\mathbb{S}_{p,i}$ of interest.

\begin{definition}\label{DEFINIZIONE}
For $i = 0,1,2$, we denote by $\mathfrak{F}_{p,i}$ the family of bases of $\mathbb{S}_{p,i}$
such that
\begin{enumerate}
\item $\mathscr{B}_{p,i} = A_{\mathscr{B}_{p,i}} \mathscr{B}_{p,\boldsymbol{\tau}_{p,i}}$, where
$A_{\mathscr{B}_{p,i}}$ has the same block structure as $A_{\mathscr{E}_{p,i}}$ in \eqref{A_E},
\item $M_{\mathscr{B}_{p,i}} K_{\mathscr{B}_{p,i}} - K_{\mathscr{B}_{p,i}} M_{\mathscr{B}_{p,i}} = 0$.
\end{enumerate}
\end{definition}

Our main result relates every element of this family to the basis $\mathscr{E}_{p,i}$.
\begin{theorem}\label{TEOREMA}
For $i=0,1,2$, let
\begin{align*}
f_0(p) &\coloneqq \max \left\{p+1,\; 3 \left\lfloor \tfrac{p}{2} \right\rfloor \right\}, \\
f_1(p) &\coloneqq \max \left\{ 2p - \left\lfloor \tfrac{p}{2} \right\rfloor,\; 2p - 2\left\lfloor \tfrac{p}{2} \right\rfloor + 1 \right\}, \\
f_2(p) &\coloneqq \max \left\{ p+1,\; p + 2\left\lceil \tfrac{p}{2} \right\rceil \right\}.
\end{align*}
For $n \geq f_i(p)$,
let $\mathscr{B}_{p,i}$ be a basis of $\mathbb{S}_{p,i}$ belonging to the family $\mathfrak{F}_{p,i}$, and let
$R_{\mathscr{B}_{p,i}} \in \mathbb{R}^{n \times n}$
such that
\begin{equation*}
\mathscr{B}_{p,i} = R_{\mathscr{B}_{p,i}} \mathscr{E}_{p,i}.
\end{equation*}
Then, $R_{\mathscr{B}_{p,i}}$ is orthogonal.
\end{theorem}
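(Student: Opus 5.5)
The approach is to reduce the statement to a rigidity property of a corner-supported perturbation, and to use the known simultaneous diagonalization of $M_{\mathscr{E}_{p,i}}$ and $K_{\mathscr{E}_{p,i}}$ (Theorems~\ref{U}--\ref{U_3}) together with the distinctness of the generalized eigenvalues (Corollary~\ref{Corolla0}).

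\emph{Step 1 (structure of $R$).} Since $\mathscr{B}_{p,i}=A_{\mathscr{B}_{p,i}}\mathscr{B}_{p,\boldsymbol{\tau}_{p,i}}$ and $\mathscr{E}_{p,i}=A_{\mathscr{E}_{p,i}}\mathscr{B}_{p,\boldsymbol{\tau}_{p,i}}$, and the B-splines are linearly independent, we have $A_{\mathscr{B}_{p,i}}=R_{\mathscr{B}_{p,i}}A_{\mathscr{E}_{p,i}}$. Compare the two sides block-column by block-column, using three facts: both matrices have the block form \eqref{A_E}, the middle block is the identity, and $A^i_{11},A^i_{33}$ have full row rank. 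From this I expect to conclude that
\begin{equation*}
R_{\mathscr{B}_{p,i}}=\begin{bmatrix} R_1 & 0 & 0\\ 0 & I_{r_i\times r_i} & 0\\ 0 & 0 & R_3\end{bmatrix},
\end{equation*}
with $R_1\in\mathbb{R}^{d_{1,i}\times d_{1,i}}$ and $R_3\in\mathbb{R}^{d_{3,i}\times d_{3,i}}$ invertible. The argument runs as follows.
\begin{itemize}
\item The middle block column of $A_{\mathscr{E}_{p,i}}$ is $[0;I;0]$, so the corresponding block column of $R$ must equal $[0;I;0]$.
\item In the first block column, the equation reads $R_{\cdot,1}A^i_{11}=[A'_{11};0;0]$. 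Full row rank of $A^i_{11}$ then kills the off-diagonal blocks $R_{21},R_{31}$.
\item The third block column is handled in the same way.
\end{itemize}
The lower bound on $n$ ensures that the two corner blocks do not overlap.

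\emph{Step 2 (reduction to a commutation identity).} Set $M\coloneqq M_{\mathscr{E}_{p,i}}$, $K\coloneqq K_{\mathscr{E}_{p,i}}$ and $S\coloneqq R^TR$. Then $M_{\mathscr{B}_{p,i}}=RMR^T$ and $K_{\mathscr{B}_{p,i}}=RKR^T$. Since $R$ is invertible, the condition $[M_{\mathscr{B}_{p,i}},K_{\mathscr{B}_{p,i}}]=0$ is equivalent to $MSK=KSM$. Write $S=I+E$, where $E=\mathrm{diag}(R_1^TR_1-I,\,0,\,R_3^TR_3-I)$ is supported in the two corner blocks. Because $MK=KM$, this becomes $MEK=KEM$. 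Let $U$ be the orthogonal matrix of the relevant theorem among Theorems~\ref{U}--\ref{U_3}, with $M=UD_MU^T$, $K=UD_KU^T$, and put $\widetilde E=U^TEU$. Then
\begin{equation*}
\widetilde E_{jk}\bigl(\mu_j\kappa_k-\kappa_j\mu_k\bigr)=0 .
\end{equation*}
By Corollary~\ref{Corolla0} the ratios $\kappa_j/\mu_j$ are pairwise distinct, so $\widetilde E$ is diagonal. Consequently $E$ commutes with $M$ (and with $K$).

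\emph{Step 3 (forcing $E=0$), which I expect to be the main obstacle.} Look at the entries of $EM=ME$ in the rows $a\le d_{1,i}$ of the left corner and in the interior columns $c$ immediately to the right of that corner.
\begin{itemize}
\item Since $E_{bc}=0$ for interior $c$, we get $(ME)_{ac}=0$.
\item On the other side, $(EM)_{ac}=\sum_{b\le d_{1,i}}E_{ab}M_{bc}$.
\end{itemize}
Hence $E_1\,M_{[1:d_{1,i}],\,[d_{1,i}+1:d_{1,i}+p]}=0$, where $E_1=R_1^TR_1-I$. It therefore suffices to show that this coupling block of the mass matrix has full row rank $d_{1,i}$. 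Its entries are $L^2$ inner products of the modified boundary functions $\mathcal{E}_{j,p,i}$ with the first interior cardinal B-splines. These are explicit values $\mathcal{N}_{2p+1}(p+1\pm\cdot)$, possibly with reflected corrections, and I expect the block to have a triangular (or Toeplitz-plus-Hankel) pattern with nonzero diagonal; this is where the specific index shifts in \eqref{Base_Eliseo}--\eqref{Base_Eliseo_3} enter. The bound $n\ge f_i(p)$ is what guarantees that these columns are genuinely interior and are not touched by the right corner. If the mass block alone turns out to be degenerate, I would also use the analogous identity with $K$, combining $M$ and $K$ (or $M+tK$ for a suitable $t$) to get full rank. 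The symmetric argument at the right corner gives $E_3=0$.

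Therefore $R_1^TR_1=I$ and $R_3^TR_3=I$, so $R^TR=I$ and $R_{\mathscr{B}_{p,i}}$ is orthogonal.
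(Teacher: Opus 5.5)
Your Steps~1 and~2 are correct and match the paper: you get Lemma~\ref{Fulcrodetutto}, and $U^TR^TRU$ is diagonal because the ratios in Corollary~\ref{Corolla0} are distinct. The gap is Step~3. It carries the whole weight of the theorem, it is only conjectured, and the justification you sketch fails on the stated range $n\ge f_i(p)$.

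\emph{The columns you use are not interior.} Near the threshold, the columns $d_{1,i}+1,\dots,d_{1,i}+p$ reach into the right corner. For $i=0$, $p=5$, $n=6=f_0(5)$, the interior indices are only $3,4$. For a column $c$ in the right corner, $(ME)_{ac}=\sum_b M_{ab}E_{bc}$ involves $E_3$ (for instance $M_{15}\neq0$), so your identity $E_1M_{[1:d_1],[d_1+1:d_1+p]}=0$ is not available.

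\emph{Restricting to interior columns does not give a triangular block.} By Theorem~\ref{U}, the entries are $(n+1)^{-1}\bigl(a_{k-j}-a_{k+j}-a_{2(n+1)-j-k}\bigr)$, with $a_s=\mathcal{N}_{2p+1}(p+1-s)$ and $a_s=0$ for $s>p$. The triangular pattern you expect, with diagonal $a_p=\mathcal{N}_{2p+1}(1)$, sits in columns $p+1,\dots,p+\lfloor p/2\rfloor$. These are interior only if $n\ge p+2\lfloor p/2\rfloor$, which exceeds $f_0(p)$ for every $p\ge2$. In that range your argument does close for $i=0$. Below it, take the block $\frac17\begin{bmatrix} a_2-a_4 & a_3-a_5\\ a_1-a_5 & a_2\end{bmatrix}$ for $p=5$, $n=6$: it is not triangular, and its nonsingularity is a numerical property of cardinal B-spline values for which you give no general argument.

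\emph{For $i=1$ the mass block is too small.} When $p$ is odd and $n=f_1(p)$, there are only $n-2\lceil p/2\rceil=\lceil p/2\rceil-1$ interior functions. For example, $p=3$, $n=5$ gives one interior column against a $2\times2$ block $E_1$. The mass coupling block then cannot have full row rank $d_{1,1}$. Adjoining the stiffness block might rescue this case, but that is again an unproven rank claim.

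The missing idea is that after Step~2 you no longer need the numerical entries of $M$ or $K$. Step~2 gives $W=R^TR=U\operatorname{diag}(\mathbf{w})U^T$. So every entry $W_{ij}$ is, up to a constant, the explicit linear functional $\sum_k w_k\bigl(\cos((i-j)\theta_k)-\cos((i+j)\theta_k)\bigr)$ with $\theta_k=k\pi/(n+1)$.

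The paper proceeds as follows:
\begin{itemize}
\item It selects $n-1$ entries of $W$ that vanish purely because of the block-diagonal zero pattern. These are row~$1$ from column $p+1$ (resp.\ $p+2$) onwards, and column $\lfloor p/2\rfloor$ just below the corner block.
\item It writes these conditions as $HC\mathbf{w}=0$ and shows $\operatorname{rank}(HC)=n-1$. Here $H$ is the transposed incidence matrix of a graph with exactly two components; this is where the bound on $n$ enters. $C$ is a Chebyshev collocation matrix of rank $n$, with $\mathbf{1}_{\mathrm{even}}\in\operatorname{im}(C)$ and $\mathbf{1}_{\mathrm{odd}}\notin\operatorname{im}(C)$.
\item Since $\mathbf{w}=\mathbf{1}$ solves the system, $W=cI$, and the identity central block forces $c=1$.
\end{itemize}
In your notation: $E$ lies in the algebra diagonalized by $U$ and vanishes outside the corner blocks, and that structural fact alone forces $E=0$. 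No information about B-spline inner products is needed.
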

The proof of the theorem is provided in Section~\ref{dimostrazione}.
We immediately deduce the following corollary.
\begin{corollary}\label{Basi_outlierfree}
For $i = 0,1,2$, let $n \geq f_i(p)$ and let $\mathscr{B}_{p,i}$ be a basis of $\mathbb{S}_{p,i}$ belonging to the family $\mathfrak{F}_{p,i}$. Then, the mass matrix $M_{\mathscr{B}_{p,i}}$ and the stiffness matrix $K_{\mathscr{B}_{p,i}}$ have the same spectra as the matrices $M_{\mathscr{E}_{p,i}}$ and $K_{\mathscr{E}_{p,i}}$, respectively. In particular, $\mathscr{B}_{p,i}$ is an outlier-free basis.
\end{corollary}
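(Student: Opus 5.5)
The approach is to reduce everything to the matrix $R \coloneqq R_{\mathscr{B}_{p,i}}$ and exploit two facts: the block structure forces $R$ to be sparse, and commutation forces $R^TR$ to be diagonal in the eigenbasis $U$. Writing $M \coloneqq M_{\mathscr{E}_{p,i}}$ and $K \coloneqq K_{\mathscr{E}_{p,i}}$, we have $M_{\mathscr{B}_{p,i}} = RMR^T$ and $K_{\mathscr{B}_{p,i}} = RKR^T$.

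First, the structure of $R$. Since $\mathscr{B}_{p,\boldsymbol{\tau}_{p,i}}$ is linearly independent, $A_{\mathscr{B}_{p,i}} = R\,A_{\mathscr{E}_{p,i}}$. Both matrices have the block form \eqref{A_E}, the middle block of $A_{\mathscr{E}_{p,i}}$ is the identity, and $A^i_{11}$, $A^i_{33}$ have full row rank. Comparing column blocks therefore gives $R = \mathrm{diag}(R_1, I_{r_i}, R_3)$, with $R_1\in\mathbb{R}^{d_{1,i}\times d_{1,i}}$ and $R_3\in\mathbb{R}^{d_{3,i}\times d_{3,i}}$ invertible. Here the middle columns force the middle rows of $R$ to be the identity and zero elsewhere, and the outer columns, together with the full rank of the outer blocks, force the off-diagonal blocks of $R$ to vanish.

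Second, the commutation condition. We have $[RMR^T, RKR^T] = R\,(M S K - K S M)\,R^T$ with $S \coloneqq R^TR$. Since $R$ is invertible, the condition in Definition~\ref{DEFINIZIONE} is equivalent to $MSK = KSM$. Theorems~\ref{U}--\ref{U_3} give $M = U D_M U^T$ and $K = U D_K U^T$, so with $T \coloneqq U^TSU$ the condition becomes $D_M T D_K = D_K T D_M$, that is,
\[
T_{jk}\bigl(\lambda_j(M)\lambda_k(K) - \lambda_j(K)\lambda_k(M)\bigr) = 0 .
\]
The bracket equals $\lambda_j(M)\lambda_k(M)\bigl(e_p(\theta_k)-e_p(\theta_j)\bigr)$ up to the scaling factors. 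It is nonzero for $j\neq k$ by the strict monotonicity of \eqref{monotonia}, as in Corollary~\ref{Corolla0}; the case $\theta=0$ for $i=1$, where $\lambda_1(K)=0$, is handled directly. Hence $T$ is diagonal, so $S = U\,\mathrm{diag}(t)\,U^T$ commutes with $M$ and $K$.

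Third, and this is the main obstacle, we must show $\mathrm{diag}(t) = I$. We know $S - I = \mathrm{diag}(R_1^TR_1 - I,\,0,\,R_3^TR_3 - I)$, so $S-I$ is supported in two corner blocks of size at most $\lceil p/2\rceil$. At the same time $S - I = U\,\mathrm{diag}(t-1)\,U^T$ commutes with $M$ and $K$. I would exploit the banded (Toeplitz$\pm$Hankel) structure of $M$, with bandwidth $p$: the commutator $[M, S-I]$ involves $M$ only near the corners. If $n$ is large enough that the two corner blocks, enlarged by the bandwidth, do not interact (this is where $n\ge f_i(p)$ enters, e.g.\ $3\lfloor p/2\rfloor$ for $i=0$), then looking at the rows of $M(S-I)$ just outside the corner block gives $M_{\text{off}}\,(S-I)_{\text{corner}} = 0$. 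Here $M_{\text{off}}$ is the block of $M$ coupling the corner indices to the next $\lceil p/2\rceil$ indices, and it is triangular with nonzero diagonal entries $\mathcal{N}_{2p+1}(\cdot)$ at the extreme lag. Its invertibility forces $(S-I)_{\text{corner}}=0$, so $R_1^TR_1 = I$ and $R_3^TR_3 = I$, and $R$ is orthogonal.

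Alternatively, one can argue spectrally: a matrix $U\,\mathrm{diag}(c)\,U^T$ whose entries vanish outside two small corners must have $c$ spanning only a few "frequencies", because $U\,\mathrm{diag}(c)\,U^T$ is itself Toeplitz$\pm$Hankel with Toeplitz part generated by the discrete transform of $c$. A vanishing middle then forces the Toeplitz generator to vanish, hence $c=0$. I would first try the banded-commutator argument and check the nondegeneracy of the relevant triangular block of $M$ via the positivity of cardinal B-spline values.
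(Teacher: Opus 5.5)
\noindent\textbf{Relation to the paper.} Steps 1--3 are not needed for this statement. In the paper the corollary takes two lines: Theorem~\ref{TEOREMA} makes $R$ orthogonal, and then \eqref{MK-conv} says $M_{\mathscr{B}_{p,i}}=RMR^T$ and $K_{\mathscr{B}_{p,i}}=RKR^T$ are orthogonally similar to $M$ and $K$. What you wrote is instead a re-proof of Theorem~\ref{TEOREMA}. Your Steps 1 and 2 coincide with the paper's (Lemma~\ref{Fulcrodetutto} and the diagonality of $U^TSU$). Your Step 3 replaces the Chebyshev/incidence-graph rank argument of Lemmas~\ref{CC}--\ref{IMPORTANTE}, and as justified it has a gap.

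\noindent\textbf{The gap in Step 3.} Take $i=0$, $d=\lfloor p/2\rfloor$ and $a_m=\mathcal{N}_{2p+1}(p+1-m)$. Near the left end, $M_{jl}$ is a positive multiple of $a_{|j-l|}-a_{j+l}$, and $a_m\neq 0$ iff $|m|\le p$.
\begin{itemize}
\item The triangular block with diagonal $a_p=\mathcal{N}_{2p+1}(1)$ that you invoke sits in rows $p+1,\dots,p+d$ against the corner columns $1,\dots,d$.
\item In rows $d+1,\dots,p$ (your ``next $\lceil p/2\rceil$ indices'') every lag is below $p$ and all entries are nonzero, so that block is not triangular.
\item Using rows $p+1,\dots,p+d$ requires them to be zero rows of $S-I$, i.e.\ $n\ge p+2\lfloor p/2\rfloor$. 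This is strictly larger than $f_0(p)=\max\{p+1,3\lfloor p/2\rfloor\}$ for every $p\ge 2$.
\item Already the paper's own example $p=5$, $n=7$ is not covered: rows $6,7$ belong to the lower corner block.
\end{itemize}
The bound $n\ge 3\lfloor p/2\rfloor$ only guarantees at least $d$ middle rows. That the resulting non-triangular block $M(\{d+1,\dots,n-d\},\{1,\dots,d\})$ has full column rank is exactly the nontrivial point, and you give no argument for it. So your remark that this is where $n\ge f_i(p)$ enters is incorrect. Your argument is only valid for $n\ge p+2\lfloor p/2\rfloor$.

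\noindent\textbf{How to close it.} Either cite Theorem~\ref{TEOREMA}, or make your ``alternative'' rigorous; that version works for all admissible $n$.
\begin{itemize}
\item Let $J=\mathrm{tridiag}(1,0,1)$. For $i=1$ add $1$ to the $(1,1)$ and $(n,n)$ entries; for $i=2$ add $1$ to the $(n,n)$ entry. Then $U$ diagonalizes $J$ and its eigenvalues are distinct.
\item Hence $D=S-I=U(\mathrm{diag}(t)-I)U^T$ is a polynomial in $J$.
\item Since $e_1^TU$ has no zero entries, such a matrix is determined by its first row.
\item Since $e_1^TJ^m$ is supported on the first $m+1$ positions with last entry $1$, and $e_1^TD$ is supported on the first $d=d_{1,i}$ positions, you get $D=\sum_{m<d}\beta_mJ^m$.
\item Row $d+1$ of $D$ is zero, being a row of the identity block of $S$.
\item For $m<d$, row $d+1$ of $J^m$ consists of binomial walk counts that never reach the boundary. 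Reading it at columns $2d,2d-1,\dots,d+1$ gives a unit triangular system, which forces $\beta_{d-1}=\dots=\beta_0=0$.
\end{itemize}
Hence $D=0$, so $R_1^TR_1=R_3^TR_3=I$ and $R$ is orthogonal, and \eqref{MK-conv} gives the equality of spectra. Beyond the hypotheses of Theorems~\ref{U}--\ref{U_3}, this only needs $n\ge d_{1,i}+d_{3,i}+1$.
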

\begin{proof}
Let $R_{\mathscr{B}_{p,i}}$ be the change-of-basis matrix such that
$\mathscr{B}_{p,i} = R_{\mathscr{B}_{p,i}} \mathscr{E}_{p,i}$.
Then, from the definition of the mass and stiffness matrices in \eqref{stiff-mass}, we obtain
\begin{equation}\label{MK-conv}
M_{\mathscr{B}_{p,i}} = R_{\mathscr{B}_{p,i}} M_{\mathscr{E}_{p,i}} R_{\mathscr{B}_{p,i}}^{T}
\quad \text{and} \quad
K_{\mathscr{B}_{p,i}} = R_{\mathscr{B}_{p,i}} K_{\mathscr{E}_{p,i}} R_{\mathscr{B}_{p,i}}^{T}.
\end{equation}
Since $R_{\mathscr{B}_{p,i}}$ is orthogonal by Theorem~\ref{TEOREMA},
the matrix $M_{\mathscr{B}_{p,i}}$ has the same spectrum as $M_{\mathscr{E}_{p,i}}$, and the same holds for $K_{\mathscr{B}_{p,i}}$ and $K_{\mathscr{E}_{p,i}}$.
\end{proof}

Note that, vice versa, whenever the basis $\mathscr{B}_{p,i}$ is obtained from $\mathscr{E}_{p,i} $ by an orthogonal matrix, then \eqref{MK-conv} implies that $M_{\mathscr{B}_{p,i}} K_{\mathscr{B}_{p,i}} - K_{\mathscr{B}_{p,i}} M_{\mathscr{B}_{p,i}} = 0$.

\subsection{Proof of Theorem~\ref{TEOREMA}}\label{dimostrazione}
This section is devoted to the proof of Theorem~\ref{TEOREMA}, which makes use of Chebyshev polynomials, graph theory, and few other preliminary lemmas. For the sake of brevity, we will only report the proof for the case $\mathbb{S}_{p,0}$. The proof strategy for the cases $\mathbb{S}_{p,1}$ and $\mathbb{S}_{p,2}$ is similar, with minor technical changes. For further details, we refer to \cite{Ricci2025}.

We start with a series of lemmas.
In the first lemma, we show a trigonometric identity.
\begin{lemma}\label{trigo}
Let $m \in \mathbb{N}$ and $\alpha \in \mathbb{R}$ such that $\sin(\alpha)\ne 0$.
Then,
\begin{equation*}
 \sum_{k=1}^m \cos\bigl((2k-1)\alpha\bigr)
= \frac{\sin(2m\alpha)}{2\sin\alpha}.
\end{equation*}
\end{lemma}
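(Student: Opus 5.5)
The plan is to prove the identity by a telescoping argument based on the product-to-sum formula. Multiply each term of the sum by $2\sin\alpha$ and use
\begin{equation*}
2\sin\alpha\cos\bigl((2k-1)\alpha\bigr) = \sin(2k\alpha) - \sin\bigl((2k-2)\alpha\bigr),
\end{equation*}
which follows from $2\sin A\cos B = \sin(A+B)+\sin(A-B)$ with $A=\alpha$, $B=(2k-1)\alpha$, since $\sin(A-B)=\sin((2-2k)\alpha)=-\sin((2k-2)\alpha)$.

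Summing over $k=1,\ldots,m$, the right-hand side telescopes. The terms $\sin(2k\alpha)$ for $k=1,\ldots,m-1$ cancel against the terms $-\sin(2k'\alpha)$ with $k'=k+1$. What remains is $\sin(2m\alpha)-\sin(0)=\sin(2m\alpha)$. Hence
\begin{equation*}
2\sin\alpha \sum_{k=1}^m \cos\bigl((2k-1)\alpha\bigr) = \sin(2m\alpha).
\end{equation*}
Dividing by $2\sin\alpha$, which is allowed because $\sin\alpha\neq 0$ by assumption, gives the stated formula.

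As an alternative check, one can write $\cos((2k-1)\alpha)=\mathrm{Re}\, e^{i(2k-1)\alpha}$ and sum the geometric series with ratio $e^{2i\alpha}\neq 1$. This yields $\mathrm{Re}\bigl(e^{i\alpha}(e^{2im\alpha}-1)/(e^{2i\alpha}-1)\bigr)=\mathrm{Re}\bigl(e^{im\alpha}\sin(m\alpha)/\sin\alpha\bigr)=\sin(m\alpha)\cos(m\alpha)/\sin\alpha$, which agrees with the claim.

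There is no real obstacle. The only point needing care is the sign bookkeeping in the product-to-sum step and the boundary term at $k=1$, which vanishes because $\sin 0 = 0$. The case $m=0$ (empty sum) also holds trivially if $0\in\mathbb{N}$.
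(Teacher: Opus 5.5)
Your proof is correct, and your main argument takes a different route from the paper's. Your ``alternative check'' is essentially the paper's proof: it sums the geometric series $S=\sum_{k=1}^m e^{2\mathrm{i}k\alpha}$, rewrites $1-e^{2\mathrm{i}m\alpha}=-2\mathrm{i}\,e^{\mathrm{i}m\alpha}\sin(m\alpha)$ (and similarly for the denominator), and takes the real part of $e^{-\mathrm{i}\alpha}S$ to get $\sin(m\alpha)\cos(m\alpha)/\sin\alpha$.

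Your telescoping argument instead uses $2\sin\alpha\cos((2k-1)\alpha)=\sin(2k\alpha)-\sin((2k-2)\alpha)$ and stays entirely in real arithmetic. It proves the undivided identity $2\sin\alpha\sum_{k=1}^m\cos((2k-1)\alpha)=\sin(2m\alpha)$ for every real $\alpha$, so the hypothesis $\sin\alpha\neq0$ enters only at the final division. In the paper's route the hypothesis is needed earlier, to ensure the ratio $e^{2\mathrm{i}\alpha}\neq 1$ so that the geometric-series formula applies.

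The complex computation, for its part, produces $\sum_{k=1}^m e^{\mathrm{i}(2k-1)\alpha}=e^{\mathrm{i}m\alpha}\sin(m\alpha)/\sin\alpha$ in one stroke, so the cosine and sine sums are handled together. For the later use of the lemma in Lemma~\ref{CC}, with $\alpha=r\pi/(n+1)$ and $r\in\{1,\ldots,n\}$, either version suffices.
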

\begin{proof}
We have
\begin{equation*}
S \coloneqq \sum_{k=1}^m e^{2\textrm{i}k\alpha} = e^{2\textrm{i}\alpha}\,\frac{1 - e^{2\textrm{i}m\alpha}}{1 - e^{2\textrm{i}\alpha}},
\end{equation*}
with $\textrm{i} \coloneqq \sqrt{-1}$,
because it is a finite geometric series with ratio $e^{2\textrm{i}\alpha}$.
By means of the relation
\begin{equation*}
1 - e^{2\textrm{i}m\alpha}
= e^{\textrm{i}m\alpha}\left(e^{-\textrm{i}m\alpha} - e^{\textrm{i}m\alpha}\right)
= -2\textrm{i}\,e^{\textrm{i}m\alpha}\sin(m\alpha),
\end{equation*}
we obtain
\begin{equation*}
S =
e^{2\textrm{i}\alpha}\,
\frac{-2\textrm{i}\,e^{\textrm{i}m\alpha}\sin(m\alpha)}
{-2\textrm{i}\,e^{\textrm{i}\alpha}\sin(\alpha)}
=
\frac{e^{\textrm{i}(m+1)\alpha}\sin(m\alpha)}{\sin(\alpha)}.
\end{equation*}
Then, denoting by $\Re(z)$ the real part of $z\in\mathbb{C}$, we arrive at
\begin{align*}
\sum_{k=1}^m \cos\bigl((2k-1)\alpha\bigr)&=\sum_{k=1}^m \Re(e^{\textrm{i}(2k-1)\alpha}) =\Re( e^{-\textrm{i}\alpha}S)
=\frac{\sin(m\alpha)}{\sin(\alpha)} \Re(e^{\textrm{i}m\alpha}) \\
&=\frac{\cos(m\alpha)\sin(m\alpha)}{\sin(\alpha)} = \frac{\sin(2m\alpha)}{2\sin\alpha},
\end{align*}
which completes the proof.
\end{proof}

Let $\operatorname{im}(A)$ and $\ker(A)$ denote the image and kernel of a matrix $A$.
We now look at a special matrix $C$, of interest later in the proof of Theorem~\ref{TEOREMA}, and state some of its properties.
\begin{lemma}\label{CC}
Consider the matrix $C\in  \mathbb{R}^{(n+1)\times n} $ consisting of the elements
\begin{equation*}
C_{i,j} \coloneqq \cos\!\left(\frac{i j \pi}{n+1}\right),
\quad
i=1,\dots,n+1,\quad j=1,\dots,n.
\end{equation*}
Then, we have
\begin{enumerate}
\item $C$ has rank $n$;
\item $\mathbf{1}_{\mathrm{even}} \in \operatorname{im}(C)$ and
      $\mathbf{1}_{\mathrm{odd}} \notin \operatorname{im}(C)$, where $\mathbf{1}_{\mathrm{even}}$ ($\mathbf{1}_{\mathrm{odd}}$) stands for the vector in $\mathbb{R}^n$ with entries equal to 1 for the even (odd) components and 0 otherwise.
\end{enumerate}
\end{lemma}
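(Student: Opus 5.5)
The plan is to treat $C$ through the cosine polynomials $P_{\mathbf a}(\theta)\coloneqq\sum_{j=1}^n a_j\cos(j\theta)$. For $\mathbf a\in\mathbb R^n$, the vector $C\mathbf a$ is exactly the vector of samples $P_{\mathbf a}(\theta_i)$ at $\theta_i\coloneqq i\pi/(n+1)$, $i=1,\dots,n+1$. Note that $\operatorname{im}(C)\subseteq\mathbb R^{n+1}$, so I read $\mathbf 1_{\mathrm{even}}$ and $\mathbf 1_{\mathrm{odd}}$ as vectors in $\mathbb R^{n+1}$.

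\emph{Rank.} I would show $\ker(C)=\{0\}$ via Chebyshev polynomials. Writing $\cos(j\theta)=T_j(\cos\theta)$, the function $P_{\mathbf a}(\theta)=Q(\cos\theta)$ with $Q\coloneqq\sum_j a_jT_j$ a polynomial of degree at most $n$. If $C\mathbf a=0$, then $Q$ vanishes at the $n+1$ points $\cos\theta_i$. These points are pairwise distinct, since $\theta_i\in(0,\pi]$ are distinct. Hence $Q\equiv 0$, and since $T_1,\dots,T_n$ are linearly independent, $\mathbf a=0$. Thus $\operatorname{rank}(C)=n$, and $\operatorname{im}(C)$ is a hyperplane in $\mathbb R^{n+1}$.

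\emph{Image.} Since $\operatorname{im}(C)$ is a hyperplane, it equals $\mathbf w^\perp$ for a single nonzero $\mathbf w$ spanning $\ker(C^T)$. The task is then to find $\mathbf w$ explicitly. Guided by DCT-I orthogonality (the constant row and the alternating row $(-1)^i$ are both orthogonal to the columns $j=1,\dots,n$, with weight $1/2$ at the endpoints), I would take
\[
w_i\coloneqq\begin{cases}1,& i \text{ odd},\ i\le n,\\ \tfrac12,& i=n+1 \text{ odd},\\ 0,& i\text{ even},\end{cases}
\]
that is, $\mathbf w$ is supported on the odd indices, with the last weight halved.

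The key step is to verify that $\sum_i w_i\cos(ij\pi/(n+1))=0$ for every $j=1,\dots,n$. Here I would invoke Lemma~\ref{trigo} with $\alpha=j\pi/(n+1)$, which is admissible because $\sin\alpha\neq0$ for $1\le j\le n$. I would split into two cases according to the parity of $n$.
\begin{itemize}
\item If $n$ is even, then $n+1$ is odd and the sum equals $\sum_{k=1}^{m}\cos((2k-1)\alpha)-\tfrac12\cos((n+1)\alpha)$ with $2m=n+2$. By Lemma~\ref{trigo} this is
\[
\frac{\sin((n+2)\alpha)}{2\sin\alpha}-\frac{(-1)^j}{2}.
\]
Since $(n+2)\alpha=j\pi+\alpha$, we have $\sin((n+2)\alpha)=(-1)^j\sin\alpha$, so the expression vanishes.
\item If $n$ is odd, then $2m=n+1$ and the sum is $\sin(j\pi)/(2\sin\alpha)=0$.
\end{itemize}
I expect this parity bookkeeping to be the only delicate point.

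\emph{Conclusion.} Since $\mathbf w$ is nonzero, it spans $\ker(C^T)$, so $\operatorname{im}(C)=\mathbf w^\perp$. Its support is disjoint from that of $\mathbf 1_{\mathrm{even}}$, so $\mathbf w^T\mathbf 1_{\mathrm{even}}=0$ and therefore $\mathbf 1_{\mathrm{even}}\in\operatorname{im}(C)$. On the other hand, $\mathbf w^T\mathbf 1_{\mathrm{odd}}$ is a sum of positive weights, hence nonzero, so $\mathbf 1_{\mathrm{odd}}\notin\operatorname{im}(C)$.
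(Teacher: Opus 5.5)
Your proposal is correct and follows the paper's proof essentially step by step. Both arguments get full column rank from the Chebyshev collocation argument at the distinct nodes $\cos(i\pi/(n+1))$. Both then use the same explicit generator of $\ker(C^T)$, verified through Lemma~\ref{trigo} with the same parity split. Your $\mathbf w$ coincides with the paper's $\mathbf v$ for $n$ even and with $\mathbf 1_{\mathrm{odd}}$ for $n$ odd; you only absorb the endpoint term $i=n+1$ into the sum slightly differently.

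You are also right to read $\mathbf 1_{\mathrm{even}}$ and $\mathbf 1_{\mathrm{odd}}$ as vectors in $\mathbb{R}^{n+1}$. The $\mathbb{R}^n$ in the statement is a slip, as the later use with $H\in\mathbb{R}^{(n-1)\times(n+1)}$ confirms.
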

\begin{proof}
To prove the first statement, we define the points
\begin{equation}\label{x_i}
x_i \coloneqq \cos\!\left(\frac{i\pi}{n+1}\right),
\quad i=1,\ldots,n+1,
\end{equation}
and use the identity
\begin{equation*}
T_{k}(x_i)
= \cos\!\left(k\arccos(x_i)\right)
= \cos\!\left(\frac{ik\pi}{n+1}\right),
\quad i=1,\ldots,n+1,
\quad k=0,\ldots,n,
\end{equation*}
where $T_k$ stands for the Chebyshev polynomial of the first kind of degree $k$.
We observe that the matrix $C$ is composed of $n$ columns of the collocation matrix of the Chebyshev polynomials up to degree $n$ at the points \eqref{x_i}.
Since the points $x_i$ are distinct, there exists a unique interpolating
polynomial in $\mathbb{P}_n$, and since the polynomials
$\{T_0,\ldots,T_n\}$ form a basis of $\mathbb{P}_n$, it follows that the columns of the collocation matrix are linearly independent, and so $C$ has full column rank $n$.

To prove the second statement, recall that, in general,
\begin{equation}\label{CAM2}
\operatorname{im}(C) = \ker(C^{T})^{\perp}.
\end{equation}
Since $C$ has rank $n$, $\ker(C^{T})$ is one-dimensional.
We explicitly construct a generator of $\ker(C^{T})$ in the following. We distinguish between two cases depending on the parity of~$n$.
\begin{itemize}
\item $n$ even:
Let $\mathbf{v}\in\mathbb{R}^{n+1}$ be the vector consisting of the elements
\begin{equation*}
v_r \coloneqq
\begin{cases}
0, & \text{if $r$ is even}, \\
1, & \text{if $r$ is odd and $r\neq n+1$}, \\
\frac{1}{2}, & \text{if $r=n+1$}.
\end{cases}
\end{equation*}
Then, for all $r\in\{1,\ldots,n\}$,
\begin{equation*}
(C^{T}\mathbf{v})_r
= \sum_{k=1}^{n/2}
\cos\!\left(\frac{(2k-1) r \pi}{n+1}\right)
+  \frac{(-1)^r}{2},
\end{equation*}
and from Lemma~\ref{trigo}, we obtain
\begin{equation*}
\sum_{k=1}^{n/2}
\cos\!\left(\frac{(2k-1) r \pi}{n+1}\right)
= -\frac{(-1)^r}{2}.
\end{equation*}
Thus, we have $C^{T}\mathbf{v}=0$.
\item $n$ odd:
Consider the vector $\mathbf{1}_{\mathrm{odd}}$. Then, for all
$r\in\{1,\ldots,n\}$,
\begin{equation*}
(C^{T}\mathbf{1}_{\mathrm{odd}})_r
= \sum_{k=1}^{(n+1)/2}
\cos\!\left(\frac{(2k-1) r \pi}{n+1}\right)
= 0,
\end{equation*}
again by Lemma~\ref{trigo}.
Thus, we have $C^{T}\mathbf{1}_{\mathrm{odd}}=0$.
\end{itemize}
We conclude that the vectors $\mathbf{v}$ and $\mathbf{1}_{\mathrm{odd}}$ generate $\ker(C^{T})$ for $n$ even and $n$ odd, respectively.
Since
\begin{equation*}
\langle \mathbf{1}_{\mathrm{even}}, \mathbf{v} \rangle
= \langle \mathbf{1}_{\mathrm{even}}, \mathbf{1}_{\mathrm{odd}} \rangle
= 0,
\end{equation*}
it follows from~\eqref{CAM2} that
$\mathbf{1}_{\mathrm{even}} \in \operatorname{im}(C)$.
On the other hand,
\begin{equation*}
\langle \mathbf{1}_{\mathrm{odd}}, \mathbf{v} \rangle \neq 0,
\quad
\langle \mathbf{1}_{\mathrm{odd}}, \mathbf{1}_{\mathrm{odd}} \rangle \neq 0,
\end{equation*}
and therefore $\mathbf{1}_{\mathrm{odd}} \notin \operatorname{im}(C)$.
\end{proof}

For a given basis $\mathscr{B}_{p,0} \in \mathfrak{F}_{p,0}$, the next lemma relates the structure of the matrix $A_{\mathscr{B}_{p,0}}$ and the structure of the change-of-basis matrix $R_{\mathscr{B}_{p,0}}$.
\begin{lemma}\label{Fulcrodetutto}
Let $\mathscr{B}_{p,0} = A_{\mathscr{B}_{p,0}} \mathscr{B}_{p,\boldsymbol{\tau}_{p,0}} = R_{\mathscr{B}_{p,0}} \mathscr{E}_{p,0}$.
Then,
\begin{equation}\label{R_struttura}
R_{\mathscr{B}_{p,0}} =
\begin{bmatrix}
R_{11} & 0 & 0 \\
0 & I_{r\times r} & 0 \\
0 & 0 & R_{33}
\end{bmatrix},
\quad
R_{11}, R_{33} \in \mathbb{R}^{\left\lfloor \frac{p}{2} \right\rfloor \times \left\lfloor \frac{p}{2} \right\rfloor}.
\end{equation}
\end{lemma}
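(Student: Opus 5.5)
The plan is to compare the two representations of $\mathscr{B}_{p,0}$ in terms of the open-knot B-spline basis and exploit the linear independence of $\mathscr{B}_{p,\boldsymbol{\tau}_{p,0}}$.

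From $\mathscr{B}_{p,0} = R_{\mathscr{B}_{p,0}} \mathscr{E}_{p,0}$ and $\mathscr{E}_{p,0} = A_{\mathscr{E}_{p,0}} \mathscr{B}_{p,\boldsymbol{\tau}_{p,0}}$ we get
\begin{equation*}
A_{\mathscr{B}_{p,0}} = R_{\mathscr{B}_{p,0}} A_{\mathscr{E}_{p,0}}.
\end{equation*}
This identity of coefficient matrices follows because the open-knot B-splines are linearly independent. Partition $R \coloneqq R_{\mathscr{B}_{p,0}}$ into $3\times 3$ blocks conformally with the row partition $(\lfloor p/2\rfloor, r, \lfloor p/2\rfloor)$ of $A_{\mathscr{E}_{p,0}}$. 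Then compute the product blockwise: the $(k,\ell)$ block column of $RA_{\mathscr{E}_{p,0}}$ equals $R_{k1}A^0_{11}$, $R_{k2}$, and $R_{k3}A^0_{33}$ for $\ell=1,2,3$, respectively. These products must match the prescribed block pattern of $A_{\mathscr{B}_{p,0}}$, which has zero off-diagonal blocks and the identity in the middle block.

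Comparing the middle block column gives $R_{12}=0$, $R_{32}=0$ and $R_{22}=I_{r\times r}$ at once. For the first block column, we need $R_{21}A^0_{11}=0$ and $R_{31}A^0_{11}=0$. Since $A^0_{11}\in\mathbb{R}^{\lfloor p/2\rfloor\times d_{2,0}}$ has full rank with $\lfloor p/2\rfloor\le d_{2,0}$, it has full row rank. Hence $A^0_{11}$ admits a right inverse, which forces $R_{21}=0$ and $R_{31}=0$. The same argument with $A^0_{33}$, also of full row rank, gives $R_{23}=0$ and $R_{13}=0$. The surviving diagonal blocks $R_{11}$ and $R_{33}$ are square of size $\lfloor p/2\rfloor$, which yields \eqref{R_struttura}.

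The main point to check is that the column blocks of $A_{\mathscr{E}_{p,0}}$ are disjoint, so that the three column groups really separate. This requires $n$ to be large enough that the boundary blocks ($d_{2,0}$ and $d_{4,0}$ columns) do not overlap the identity block. Given the dimension count $n_0 = \lfloor p/2\rfloor$-rows version of $d_{2,0}+r+d_{4,0}$, this holds under the standing assumption $n\ge p+1$. I would verify it by checking $r = n-2\lfloor p/2\rfloor \ge 0$ together with $n_0 = d_{2,0}+r+d_{4,0}$ in both parity cases.

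The remaining ingredient is the full row rank of $A^0_{11}$ and $A^0_{33}$, which is asserted in the preceding lemma. If one wanted it directly, it follows from the linear independence of the basis functions $\mathcal{E}_{j,p,0}$, $j\le\lfloor p/2\rfloor$.
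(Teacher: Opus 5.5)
Your proposal is correct and takes the same approach as the paper: the paper's one-line argument ("same block structure, conclusion straightforward") is exactly the blockwise comparison of $A_{\mathscr{B}_{p,0}} = R_{\mathscr{B}_{p,0}} A_{\mathscr{E}_{p,0}}$ that you spell out. The middle block column forces $R_{12}=R_{32}=0$ and $R_{22}=I_{r\times r}$, and the full row rank of $A^0_{11}$ and $A^0_{33}$ from the preceding lemma kills the remaining off-diagonal blocks. Your third paragraph is garbled as written, but that check is unnecessary: the preceding lemma already fixes the column partition $d_{2,0}+r+d_{4,0}=n_0$, with $r=n-2\lfloor p/2\rfloor\ge 1$ since $n\ge p+1$.
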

\begin{proof}
Since the structure of $A_{\mathscr{B}_{p,0}}$ and $A_{\mathscr{E}_{p,0}}$ is the same (see \eqref{A_E}), the conclusion is straightforward.
\end{proof}

We now build a matrix $H$ linked to the structure of the matrix $R_{\mathscr{B}_{p,0}}$ in \eqref{R_struttura} and determine its rank by exploiting graph theory. To this end, we define a set of index pairs, which identifies some zero entries of the first row and of the $\left\lfloor \frac{p}{2}\right\rfloor$-th column of $R_{\mathscr{B}_{p,0}}$.
\begin{lemma}\label{HH}
Let $n \geq f_0(p)$ and let $\mathcal{I}_p$ be the set of $n-1$ index pairs defined by
\begin{equation*}
\begin{aligned}
 &\{(i,j) : i = 1 \text{ and } p+1 \le j \le n \} \\
&\qquad \cup
\left\{(i,j) : \left\lfloor \tfrac{p}{2} \right\rfloor + 1 \le i \le
\left\lfloor \tfrac{p}{2} \right\rfloor + p-1 \text{ and }
j = \left\lfloor \tfrac{p}{2} \right\rfloor \right\},
\end{aligned}
\end{equation*}
if $p$ is odd, and by
\begin{equation*}
\{(i,j) : i = 1 \text{ and } p+2 \le j \le n \}
\cup
\left\{ (i,j) : \tfrac{p}{2} + 1 \le i \le \tfrac{p}{2} + p \text{ and }
j = \tfrac{p}{2} \right\},
\end{equation*}
if $p$ is even, where we assume that a set is empty if the upper bound of the indices is less than the lower bound.
Moreover, let
$H \in \mathbb{R}^{(n-1) \times (n+1)}$ be the matrix whose $k$-th row is specified by the $k$-th index pair $(i,j)$ of the set $\mathcal{I}_p$ as
\begin{equation*}
H_{k,\bullet} \coloneqq \mathbf{e}_{|i-j|} - \mathbf{e}_{t(i+j)}
\quad k =1,\ldots,n-1,
\end{equation*}
where $\mathbf{e}_r$ denotes the $r$-th vector of the canonical basis and
\begin{equation*}
t(i+j) \coloneqq
\begin{cases}
i + j, & \text{if } i + j \le n, \\
2(n + 1) - (i + j), & \text{if } i + j > n.
\end{cases}
\end{equation*}
Then, we have
$\operatorname{rank}(H) = n - 1$.
\end{lemma}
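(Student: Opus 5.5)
The plan is to recognize $H$ as the transposed incidence matrix of a graph and to apply Theorem~\ref{Grafi1}. Each row $H_{k,\bullet}=\mathbf{e}_{|i-j|}-\mathbf{e}_{t(i+j)}$ has one $+1$ and one $-1$, provided the two indices are distinct and lie in $\{1,\dots,n+1\}$. In that case $H^T$ is an incidence matrix of the graph $G$ with node set $\{1,\dots,n+1\}$ and one edge $\{|i-j|,\,t(i+j)\}$ for each pair in $\mathcal{I}_p$. Then $\operatorname{rank}(H)=\operatorname{rank}(H^T)=(n+1)-c(G)$. Since $G$ has $n+1$ nodes and $n-1$ edges, it suffices to show that $G$ is a simple forest: no loops, no repeated edges, no cycles. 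A forest has $c(G)=(n+1)-(n-1)=2$, which gives $\operatorname{rank}(H)=n-1$.

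First I list the edges explicitly, say for $p$ odd with $h=\lfloor p/2\rfloor=(p-1)/2$.

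The first family ($i=1$, $p+1\le j\le n$) gives the edges $\{j-1,\,j+1\}$ for $j<n$. For $j=n$ we have $t(n+1)=n+1$, so this edge is $\{n-1,n+1\}$, which fits the same formula. These edges form two disjoint paths on the nodes $\{p,\dots,n+1\}$: one through the nodes of one parity and one through the nodes of the other. Both are paths because consecutive nodes of the same parity are joined exactly once.

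The second family ($j=h$, $i=h+k$ with $k=1,\dots,p-1$) gives the edges $\{k,\,t(k+p-1)\}$. Here $k\le p-1$, and $k+p-1$ ranges over $[p,2p-2]$. If it exceeds $n$ it is reflected to $2(n+1)-(k+p-1)$. Reflection preserves parity, and since $n\ge p+1$ the reflected value stays in $[p,n+1]$ and never equals a small node. So each small node $k\in\{1,\dots,p-1\}$ receives exactly one edge, joining it to a node of the two paths. No loops occur, and the two families produce no duplicate edges, because every second-family edge contains a node $<p$ while no first-family edge does.

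The even case is the same. The first family gives paths $\{j-1,j+1\}$ for $j\ge p+2$ on the nodes $\{p+1,\dots,n+1\}$. The second family attaches each small node $k=1,\dots,p$ to $t(k+p)\ge p+1$.

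Hence $G$ consists of two paths with pendant leaves hanging from them, which is a forest. The count above then gives $c(G)=2$ and $\operatorname{rank}(H)=n-1$. The hypothesis $n\ge f_0(p)$ enters only to guarantee that the index ranges are consistent and that the reflection $t$ keeps the pendant endpoints inside the path nodes.

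The main obstacle is this bookkeeping for the reflected indices. One must check that $t(k+p-1)$, respectively $t(k+p)$, never lands below $p$ (respectively $p+1$) and never coincides with $k$. If it did, the corresponding edge would join two small nodes or form a loop, and the forest argument would fail. I would verify these inequalities directly from $n\ge p+1$, treating separately the cases $k+p-1\le n$ and $k+p-1>n$.
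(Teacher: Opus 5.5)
Your plan is the paper's own: read $H^T$ as an incidence matrix. The pairs $(1,j)$ give the two parity paths $\{j-1,j+1\}$ on the large nodes. The pairs in column $\lfloor p/2\rfloor$ hang each small node $k$ on them as a leaf. Theorem~\ref{Grafi1} then finishes, and getting $c(G)=2$ from ``forest with $n+1$ nodes and $n-1$ edges'' is a clean way to do it.

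The gap is in the step you flagged as the main obstacle. You justify it by ``since $n\ge p+1$ the reflected value stays in $[p,n+1]$ and never equals a small node'', and you plan to verify it from $n\ge p+1$. That claim is false. For a reflected pair you only get $t(k+p-1)=2(n+1)-(k+p-1)\ge 2n+4-2p$ for odd $p$, resp.\ $t(k+p)\ge 2n+2-2p$ for even $p$. Under $n\ge p+1$ alone these bounds fall well below $p$.

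Concretely, take $p=7$, $n=8$, which satisfies $n\ge p+1$. The pair $(9,3)$, i.e.\ $k=6$, gives $|i-j|=6=18-12=t(12)$. That is a loop, so the row of $H$ vanishes and in fact $\operatorname{rank}(H)=6<n-1$. Similarly, $p=4$, $n=5$ gives a zero row from the pair $(6,2)$. More generally, at $n=3\lfloor p/2\rfloor-1$ the last pair of the second family produces the loop $\{k,k\}$, with $k=p-1$ for odd $p$ and $k=p$ for even $p$. So the lemma itself fails under $n\ge p+1$ alone, and the verification you propose cannot succeed.

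The missing ingredient is the second term of $f_0(p)$.
\begin{itemize}
\item For $p$ odd, $n\ge 3\lfloor p/2\rfloor=(3p-3)/2$ gives $t(k+p-1)\ge 2n+4-2p\ge p+1$ whenever $k+p-1>n$.
\item For $p$ even, $n\ge 3p/2$ gives $t(k+p)\ge 2n+2-2p\ge p+2$ whenever $k+p>n$.
\item Unreflected values are $k+p-1\ge p$, resp.\ $k+p\ge p+1$, and $t\le n+1$ always.
\end{itemize}
With these bounds every small node is a leaf attached to a path node, and there are no loops or repeated edges, so your forest argument is complete. This is exactly where the paper invokes $n\ge f_0(p)$. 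Your closing remark that $f_0(p)$ is what keeps the pendant endpoints inside the path nodes was correct. What must change is the attribution to $n\ge p+1$: it should be $n\ge 3\lfloor p/2\rfloor$.
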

\begin{proof}
We start the proof by observing that $H$ can be regarded as the transpose of an incidence matrix of a directed graph (see Section~\ref{grafi}).
Therefore, it suffices to show that this graph has two connected components and the rank of $H$ follows from Theorem~\ref{Grafi1}. In the following, we describe a constructive procedure to build the two connected components.
\begin{enumerate}
\item We first build two main sequences of index pairs $(i,j)$ belonging to $\mathcal{I}_p \cup (1, n+1) \cup (1, n+2)$:
\begin{align}
\label{psp}
\left(\left\lfloor \tfrac{p}{2} \right\rfloor + 1, \left\lfloor \tfrac{p}{2} \right\rfloor\right)
& \rightsquigarrow \left(1, p + 1 \right) \rightsquigarrow \left(1, p+3 \right)
\rightsquigarrow \left(1, p+5 \right) \rightsquigarrow  \cdots \rightsquigarrow (1, \ell),\\
\label{ssp}
\left(\left\lfloor \tfrac{p}{2} \right\rfloor + 2, \left\lfloor \tfrac{p}{2} \right\rfloor\right)
& \rightsquigarrow (1,p+2) \rightsquigarrow (1,p+4) \rightsquigarrow (1,p+6)
\rightsquigarrow  \cdots \rightsquigarrow (1,\ell),
\end{align}
if $p$ is odd, and
\begin{align}
\label{pspp}
\left( \tfrac{p}{2} + 1, \tfrac{p}{2} \right)
& \rightsquigarrow (1,p+2) \rightsquigarrow (1,p+4) \rightsquigarrow (1,p+6)
 \rightsquigarrow \cdots \rightsquigarrow (1,\ell),\\
\label{sspp}
\left( \tfrac{p}{2} + 2, \tfrac{p}{2} \right)
& \rightsquigarrow \left(1, p + 3 \right) \rightsquigarrow \left(1, p+5 \right)
\rightsquigarrow \left(1, p+7 \right)
\rightsquigarrow \cdots \rightsquigarrow (1, \ell),
\end{align}
if $p$ is even, where $\ell \in \{n+1,n+2\}$ depending on the parity of $n$ and $p$.
For example, if $p$ is odd and $n$ is even, since $n > p$, then $r = n - p$ is odd and $(1, p+r) = (1, n)$, which means that the sequence \eqref{psp} has $\ell = n+2$ and \eqref{ssp} has $\ell = n+1$ in this case. We need to use the same reasoning in the other cases.
Thus, in total, we count $n-p+4$ index pairs in \eqref{psp}--\eqref{ssp} and $n-p+3$ index pairs in \eqref{pspp}--\eqref{sspp}. To each of these index pairs $(i,j)$, we assign the vertex $v_{|i-j|}$. For each $|i-j|<\ell-1$, we consider an edge from $v_{|i-j|}$ (tail) to $v_{t(i+j)}$ (head).
\item We then connect the remaining index pairs to the previous sequences as follows:
\begin{alignat}{2}
\label{aus1d}
\left(\left\lfloor \tfrac{p}{2} \right\rfloor + 2k+1, \left\lfloor \tfrac{p}{2} \right\rfloor\right)
 &\rightsquigarrow \left(1, t(p + 2k) + 1 \right),
 \quad &k &= 1, 2,\ldots, \tfrac{p-3}{2}, \\
\label{aus2d}
\left(\left\lfloor \tfrac{p}{2} \right\rfloor + 2k, \left\lfloor \tfrac{p}{2} \right\rfloor\right)
 &\rightsquigarrow \left(1, t(p + 2k -1) + 1 \right),
 \quad &k &= 2, 3, \ldots, \tfrac{p-1}{2},
\end{alignat}
if $p$ is odd, and
\begin{alignat}{2}
\label{aus1p}
\left( \tfrac{p}{2} + 2k+1, \tfrac{p}{2} \right)
 &\rightsquigarrow \left(1, t(p + 2k + 1) + 1 \right),
 \quad &k &= 1, 2, \ldots, \tfrac{p-2}{2}, \\
\label{aus2p}
\left( \tfrac{p}{2} + 2k, \tfrac{p}{2} \right)
 &\rightsquigarrow \left(1, t(p + 2k) + 1 \right),
 \quad &k &= 2, 3, \ldots, \tfrac{p}{2},
\end{alignat}
if $p$ is even.
Thus, in total, we have $p-3$ additional index pairs in \eqref{aus1d}--\eqref{aus2d} and $p-2$ additional index pairs in \eqref{aus1p}--\eqref{aus2p}. To each of these index pairs $(i,j)$, we assign the vertex $v_{|i-j|}$, and we consider an edge from $v_{|i-j|}$ (tail) to $v_{t(i+j)}$ (head).
Note that, since $n\geq f_0(p)$, \eqref{aus1d} ensures $p\leq t(i+j)\leq \ell-1$ and \eqref{aus2d} gives $p+1\leq t(i+j)\leq \ell-1$, for $p$ odd. Similarly, \eqref{aus1p} ensures $p+1\leq t(i+j)\leq \ell-1$ and
\eqref{aus2p} gives $p+2\leq t(i+j)\leq \ell-1$, for $p$ even.
\end{enumerate}
After putting together the two steps above, we obtain a directed graph with $n+1$ vertices and $n-1$ edges, consisting of two connected components. For example, assuming $n\gg p$, it can be visualized as
\begin{center}
\begin{tikzpicture}[
scale=1.1,
every node/.style={font=\small},
vertex/.style={circle, draw, fill=gray!30, inner sep=1.6pt, minimum size=6pt},
edge/.style={-{Latex[length=2mm]}, thin}]
\def\dy{1.0}
\def\dx{1.2}
\node[vertex,label=above:$v_1$]         (v1)   at (0,0) {};
\node[vertex,label=above:$v_p$]         (vp)   at (1*\dx,0) {};
\node[vertex,label=above:$v_{p+2}$]     (vp1)  at (2*\dx,0) {};
\node[vertex,label=above:$v_{p+4}$]     (vp2)  at (3*\dx,0) {};
\node[vertex,label=above:$v_{p+6}$]     (vp3)  at (4*\dx,0) {};
\node[label=above:$\cdots$]             (dots) at (5*\dx,0) {};
\node[vertex,label=above:$v_{\ell-1}$]  (vl1)  at (6*\dx,0) {};
\draw[edge] (v1) -- (vp);
\draw[edge] (vp) -- (vp1);
\draw[edge] (vp1) -- (vp2);
\draw[edge] (vp2) -- (vp3);
\draw[edge,dashed] (vp3) -- (vl1);
\node[vertex,label=below:$v_3$] (v3) at (2*\dx, -\dy) {};
\node[vertex,label=below:$v_5$] (v5) at (3*\dx, -\dy) {};
\node[vertex,label=below:$v_7$] (v7) at (4*\dx, -\dy) {};
\draw[edge] (v3) -- (vp1);
\draw[edge] (v5) -- (vp2);
\draw[edge] (v7) -- (vp3);
\end{tikzpicture}

\begin{tikzpicture}[
scale=1.1,
every node/.style={font=\small},
vertex/.style={circle, draw, fill=gray!30, inner sep=1.6pt, minimum size=6pt},
edge/.style={-{Latex[length=2mm]}, thin}]
\def\dy{1.0}
\def\dx{1.2}
\node[vertex,label=above:$v_2$]         (v2)   at (0,0) {};
\node[vertex,label=above:$v_{p+1}$]     (vp1)  at (1*\dx,0) {};
\node[vertex,label=above:$v_{p+3}$]     (vp2)  at (2*\dx,0) {};
\node[vertex,label=above:$v_{p+5}$]     (vp3)  at (3*\dx,0) {};
\node[vertex,label=above:$v_{p+7}$]     (vp4)  at (4*\dx,0) {};
\node[label=above:$\cdots$]             (dots) at (5*\dx,0) {};
\node[vertex,label=above:$v_{\ell-1}$]  (vl1)  at (6*\dx,0) {};
\draw[edge] (v2) -- (vp1);
\draw[edge] (vp1) -- (vp2);
\draw[edge] (vp2) -- (vp3);
\draw[edge] (vp3) -- (vp4);
\draw[edge,dashed] (vp4) -- (vl1);
\node[vertex,label=below:$v_4$]  (v4)  at (2*\dx, -\dy) {};
\node[vertex,label=below:$v_6$]  (v6)  at (3*\dx, -\dy) {};
\node[vertex,label=below:$v_8$]  (v8)  at (4*\dx, -\dy) {};
\draw[edge] (v4) -- (vp2);
\draw[edge] (v6) -- (vp3);
\draw[edge] (v8) -- (vp4);
\end{tikzpicture}
\end{center}
if $p$ is odd, and
\begin{center}
\begin{tikzpicture}[
scale=1.1,
every node/.style={font=\small},
vertex/.style={circle, draw, fill=gray!30, inner sep=1.6pt, minimum size=6pt},
edge/.style={-{Latex[length=2mm]}, thin}]
\def\dy{1.0}
\def\dx{1.2}
\node[vertex,label=above:$v_1$]         (v1)   at (4*\dx,0) {};
\node[vertex,label=above:$v_{p+1}$]     (vp1)  at (5*\dx,0) {};
\node[vertex,label=above:$v_{p+3}$]     (vp2)  at (6*\dx,0) {};
\node[vertex,label=above:$v_{p+5}$]     (vp3)  at (7*\dx,0) {};
\node[vertex,label=above:$v_{p+7}$]     (vp4)  at (8*\dx,0) {};
\node[label=above:$\cdots$]             (dots) at (9*\dx,0) {};
\node[vertex,label=above:$v_{\ell-1}$]  (vl1)  at (10*\dx,0) {};
\draw[edge] (v1) -- (vp1);
\draw[edge] (vp1) -- (vp2);
\draw[edge] (vp2) -- (vp3);
\draw[edge] (vp3) -- (vp4);
\draw[edge,dashed] (vp4) -- (vl1);
\node[vertex,label=below:$v_3$] (v3) at (6*\dx, -\dy) {};
\node[vertex,label=below:$v_5$] (v5) at (7*\dx, -\dy) {};
\node[vertex,label=below:$v_7$] (v7) at (8*\dx, -\dy) {};
\draw[edge] (v3) -- (vp2);
\draw[edge] (v5) -- (vp3);
\draw[edge] (v7) -- (vp4);
\end{tikzpicture}

\begin{tikzpicture}[
scale=1.1,
every node/.style={font=\small},
vertex/.style={circle, draw, fill=gray!30, inner sep=1.6pt, minimum size=6pt},
edge/.style={-{Latex[length=2mm]}, thin}]
\def\dy{1.0}
\def\dx{1.2}
\node[vertex,label=above:$v_2$]         (v2)   at (4*\dx,0) {};
\node[vertex,label=above:$v_{p+2}$]     (vp1)  at (5*\dx,0) {};
\node[vertex,label=above:$v_{p+4}$]     (vp2)  at (6*\dx,0) {};
\node[vertex,label=above:$v_{p+6}$]     (vp3)  at (7*\dx,0) {};
\node[vertex,label=above:$v_{p+8}$]     (vp4)  at (8*\dx,0) {};
\node[label=above:$\cdots$]             (dots) at (9*\dx,0) {};
\node[vertex,label=above:$v_{\ell-1}$]  (vl1)  at (10*\dx,0) {};
\draw[edge] (v2) -- (vp1);
\draw[edge] (vp1) -- (vp2);
\draw[edge] (vp2) -- (vp3);
\draw[edge] (vp3) -- (vp4);
\draw[edge,dashed] (vp4) -- (vl1);
\node[vertex,label=below:$v_4$]  (v4)  at (6*\dx, -\dy) {};
\node[vertex,label=below:$v_6$]  (v6)  at (7*\dx, -\dy) {};
\node[vertex,label=below:$v_8$]  (v8)  at (8*\dx, -\dy) {};
\draw[edge] (v4) -- (vp2);
\draw[edge] (v6) -- (vp3);
\draw[edge] (v8) -- (vp4);
\end{tikzpicture}
\end{center}
if $p$ is even.
It is easy to check that $H$ coincides with the transpose of the incidence matrix of this graph.
Therefore, Theorem~\ref{Grafi1} implies that
$\operatorname{rank}(H) = (n+1) - 2 = n - 1$.
\end{proof}

Finally, our last lemma  determines the rank of the matrix  $HC$, where $H$  and $C$ have been described above.
\begin{lemma}\label{IMPORTANTE}
Let $C$ be as in Lemma~\ref{CC} and let $H$ be as in Lemma~\ref{HH}. Then,
$\operatorname{rank}(HC) = n - 1$.
\end{lemma}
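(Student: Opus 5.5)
We need $\operatorname{rank}(HC)=n-1$, where $H\in\mathbb{R}^{(n-1)\times(n+1)}$ and $C\in\mathbb{R}^{(n+1)\times n}$. The plan is to work with kernels and use the sharp information from Lemma~\ref{CC} and Lemma~\ref{HH}. Since $C$ has full column rank $n$ (Lemma~\ref{CC}), the map $\mathbf{x}\mapsto C\mathbf{x}$ is injective. Hence
\begin{equation*}
\ker(HC)=\{\mathbf{x}: C\mathbf{x}\in\ker(H)\}\cong \ker(H)\cap\operatorname{im}(C),
\end{equation*}
and $\operatorname{rank}(HC)=n-\dim\bigl(\ker(H)\cap\operatorname{im}(C)\bigr)$. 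It therefore suffices to show that $\ker(H)\cap\operatorname{im}(C)$ is one-dimensional.

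First, identify $\ker(H)$ explicitly. By Lemma~\ref{HH}, $H$ is the transpose of the incidence matrix of a graph on the $n+1$ vertices $v_1,\dots,v_{n+1}$ with exactly two connected components, so $\dim\ker(H)=2$. A vector $\mathbf{y}$ satisfies $H\mathbf{y}=0$ exactly when it is constant on each component. From the construction in the proof of Lemma~\ref{HH}, the components consist of odd-indexed and even-indexed vertices respectively. One sees this from the pictures: each edge joins $v_{|i-j|}$ to $v_{t(i+j)}$, and $|i-j|$, $i+j$ and $2(n+1)-(i+j)$ all have the same parity. Consequently $\ker(H)=\operatorname{span}\{\mathbf{1}_{\mathrm{odd}},\mathbf{1}_{\mathrm{even}}\}$, understood as vectors in $\mathbb{R}^{n+1}$, consistent with their use alongside $\ker(C^T)$ in the proof of Lemma~\ref{CC}.

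Next, intersect this kernel with $\operatorname{im}(C)$. By Lemma~\ref{CC}, $\mathbf{1}_{\mathrm{even}}\in\operatorname{im}(C)$, so the intersection is at least one-dimensional. Conversely, take a general element $a\mathbf{1}_{\mathrm{odd}}+b\mathbf{1}_{\mathrm{even}}$ of the intersection. Subtracting $b\mathbf{1}_{\mathrm{even}}\in\operatorname{im}(C)$ gives $a\mathbf{1}_{\mathrm{odd}}\in\operatorname{im}(C)$, and since $\mathbf{1}_{\mathrm{odd}}\notin\operatorname{im}(C)$ this forces $a=0$. Thus $\ker(H)\cap\operatorname{im}(C)=\operatorname{span}\{\mathbf{1}_{\mathrm{even}}\}$, and $\operatorname{rank}(HC)=n-1$.

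The main obstacle is the parity claim in the first step. Lemma~\ref{HH} only records the rank, so I would verify directly from the definition of $\mathcal{I}_p$ that every edge preserves parity and that the two components are exactly the odd and the even vertices. The parity preservation is immediate, since $|i-j|\equiv i+j\equiv 2(n+1)-(i+j) \pmod 2$. Together with the count of two components, this pins down the components as the odd class and the even class, because an edge never joins the two classes and both classes are nonempty. This in turn identifies $\ker(H)$ precisely as stated.
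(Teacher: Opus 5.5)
Your proposal is correct and follows the paper's proof. Both arguments use $\operatorname{rank}(HC)=\operatorname{rank}(C)-\dim\bigl(\operatorname{im}(C)\cap\ker(H)\bigr)$ with $\operatorname{rank}(C)=n$, identify $\ker(H)=\operatorname{span}(\mathbf{1}_{\mathrm{odd}},\mathbf{1}_{\mathrm{even}})$, and apply Lemma~\ref{CC} to conclude that the intersection is one-dimensional. Your parity argument ($|i-j|\equiv i+j\equiv 2(n+1)-(i+j) \pmod 2$, together with the two components from Lemma~\ref{HH}) usefully makes explicit the identification of $\ker(H)$, which the paper only asserts.
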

\begin{proof}
In general, the following identity holds:
\begin{equation*}
\operatorname{rank}(HC)
= \operatorname{rank}(C) - \dim\!\left(\operatorname{im}(C) \cap \ker(H)\right).
\end{equation*}
By item 1 of Lemma~\ref{CC} we have $\operatorname{rank}(C) = n$. Moreover, by the definition of $H$ and Lemma~\ref{HH},
\begin{equation*}
\operatorname{span}\!\left(\mathbf{1}_{\mathrm{odd}},\, \mathbf{1}_{\mathrm{even}}\right)
= \ker(H).
\end{equation*}
On the other hand, by item 2 of Lemma~\ref{CC}, we have
$\mathbf{1}_{\mathrm{even}} \in \operatorname{im}(C)$ and
$\mathbf{1}_{\mathrm{odd}} \notin \operatorname{im}(C)$, so
\begin{equation*}
\dim\!\left(\operatorname{im}(C) \cap \ker(H)\right) = 1.
\end{equation*}
Therefore,
$\operatorname{rank}(HC) = n - 1$.
\end{proof}

We have now at our disposal all the ingredients for the proof of Theorem~\ref{TEOREMA}.
\begin{proof}[Proof of Theorem~\ref{TEOREMA}]
From $\mathscr{B}_{p,0} = R_{\mathscr{B}_{p,0}} \mathscr{E}_{p,0}$, it follows that
\begin{equation*}
M_{\mathscr{B}_{p,0}} = R_{\mathscr{B}_{p,0}} M_{\mathscr{E}_{p,0}} R_{\mathscr{B}_{p,0}}^{T}
\quad \text{and} \quad
K_{\mathscr{B}_{p,0}} = R_{\mathscr{B}_{p,0}} K_{\mathscr{E}_{p,0}} R_{\mathscr{B}_{p,0}}^{T}.
\end{equation*}
Therefore, since $\mathscr{B}_{p,0}\in \mathfrak{F}_{p,0}$ and by Definition~\ref{DEFINIZIONE}, we have
\begin{equation*}
R_{\mathscr{B}_{p,0}} M_{\mathscr{E}_{p,0}} W K_{\mathscr{E}_{p,0}} R_{\mathscr{B}_{p,0}}^{T}
-
R_{\mathscr{B}_{p,0}} K_{\mathscr{E}_{p,0}} W M_{\mathscr{E}_{p,0}} R_{\mathscr{B}_{p,0}}^{T}
= 0,
\end{equation*}
with $W \coloneqq R_{\mathscr{B}_{p,0}}^{T} R_{\mathscr{B}_{p,0}}$, which is equivalent to
\begin{equation}\label{eqfond}
M_{\mathscr{E}_{p,0}} W K_{\mathscr{E}_{p,0}}
-
K_{\mathscr{E}_{p,0}} W M_{\mathscr{E}_{p,0}}
= 0
\end{equation}
by the invertibility of $R_{\mathscr{B}_{p,0}}$. From Theorem~\ref{U}, we know that there
exists an orthogonal and symmetric matrix $U$ that diagonalizes
$M_{\mathscr{E}_{p,0}}$ and $K_{\mathscr{E}_{p,0}}$. Thus, \eqref{eqfond} can be rewritten as
\begin{equation}\label{W}
\Lambda_{M_{\mathscr{E}_{p,0}}}\, U W U\, \Lambda_{K_{\mathscr{E}_{p,0}}}
-
\Lambda_{K_{\mathscr{E}_{p,0}}}\, U W U\, \Lambda_{M_{\mathscr{E}_{p,0}}}
= 0,
\end{equation}
where we used the invertibility of $U$, and where
$\Lambda_{M_{\mathscr{E}_{p,0}}}$ and $\Lambda_{K_{\mathscr{E}_{p,0}}}$ denote the diagonal eigenvalue matrices of $M_{\mathscr{E}_{p,0}}$ and $K_{\mathscr{E}_{p,0}}$, respectively.
Setting $\hat{W} \coloneqq U W U$, we can read \eqref{W} entrywise:
\begin{align*}
&\bigl(\Lambda_{M_{\mathscr{E}_{p,0}}} \hat{W} \Lambda_{K_{\mathscr{E}_{p,0}}}
- \Lambda_{K_{\mathscr{E}_{p,0}}} \hat{W} \Lambda_{M_{\mathscr{E}_{p,0}}}\bigr)_{i,j} \\
&\qquad=
\hat{W}_{i,j}\bigl(
\lambda_i(M_{\mathscr{E}_{p,0}}) \lambda_j(K_{\mathscr{E}_{p,0}})
-
\lambda_i(K_{\mathscr{E}_{p,0}}) \lambda_j(M_{\mathscr{E}_{p,0}})
\bigr)\\
&\qquad=
\lambda_j(M_{\mathscr{E}_{p,0}}) \lambda_i(M_{\mathscr{E}_{p,0}})\, \hat{W}_{i,j}
\left(
\frac{\lambda_j(K_{\mathscr{E}_{p,0}})}{\lambda_j(M_{\mathscr{E}_{p,0}})}
-
\frac{\lambda_i(K_{\mathscr{E}_{p,0}})}{\lambda_i(M_{\mathscr{E}_{p,0}})}
\right)
= 0.
\end{align*}
Corollary~\ref{Corolla0} ensures that
$\frac{\lambda_k(K_{\mathscr{E}_{p,0}})}{\lambda_k(M_{\mathscr{E}_{p,0}})}$ are all distinct for $k = 1,\ldots,n$.
Thus,
$\hat{W}_{i,j} = 0$ for all $i,j$ such that $i \neq j$,
or in other words, $\hat{W}$ is a diagonal matrix. This means that
\begin{equation*}
W = R_{\mathscr{B}_{p,0}}^{T} R_{\mathscr{B}_{p,0}}
= U \operatorname{diag}(\mathbf{w})\, U,
\end{equation*}
where $\mathbf{w} \coloneqq (w_1,\ldots,w_n)^T \in \mathbb{R}^n$ is a vector with positive entries.

For each index pair $(i,j) \in \{1,\ldots,n\} \times \{1,\ldots,n\}$ such that $i \neq j$, the corresponding elements of $W$ satisfy
\begin{equation}\label{H}
W_{i,j}
= \sum_{k=1}^{n} w_k U_{i,k} U_{j,k}
= \sum_{k=1}^{n} w_k
\sin\!\left(\frac{ik\pi}{n+1}\right)
\sin\!\left(\frac{jk\pi}{n+1}\right)
= 0.
\end{equation}
Using the trigonometric identity
$\sin(\alpha)\sin(\beta)
= \frac{1}{2}\bigl(\cos(\alpha-\beta) - \cos(\alpha+\beta)\bigr)$,
we can rewrite \eqref{H} as
\begin{equation}\label{Wij}
W_{i,j}
=
\sum_{k=1}^{n}\frac{w_k}{2}
\left(
\cos\!\left(\frac{(i-j)k\pi}{n+1}\right)
-
\cos\!\left(\frac{(i+j)k\pi}{n+1}\right)
\right)
= 0.
\end{equation}
Recall now the following properties of the cosine function,
\begin{equation*}
\cos(-x) = \cos(x),
\quad\text{and}\quad
\cos\!\left(\left(2(a+1)-b\right)\frac{k\pi}{a+1}\right)
=
\cos\!\left(\frac{b k\pi}{a+1}\right),
\end{equation*}
with $x \in \mathbb{R}$ and $a,b,k \in \mathbb{N}$.
We select $m$ equations of the form \eqref{H}, and to each of the index pairs $(i,j)$, we assign an integer $k=1,\ldots,m$. Then, we can rewrite them compactly using \eqref{Wij} and the cosine matrix $C$ defined in Lemma~\ref{CC} as follows:
\begin{equation*}
Z C \mathbf{w} = 0,
\quad
Z_{k,\bullet} \coloneqq \mathbf{e}_{|i-j|} - \mathbf{e}_{t(i+j)},
\quad  k  = 1,\ldots,m,
\end{equation*}
where $\mathbf{e}_r$ denotes the $r$-th vector of the canonical basis and
\begin{equation*}
t(i+j) \coloneqq
\begin{cases}
i + j, & \text{if } i + j \le n, \\
2(n + 1) - (i + j), & \text{if } i + j > n.
\end{cases}
\end{equation*}

Since $\mathscr{B}_{p,0} \in \mathfrak{F}_{p,0}$, Lemma~\ref{Fulcrodetutto} implies that $R_{\mathscr{B}_{p,0}}$ is block diagonal. The same holds for the matrix $W$, and the index pairs in Lemma~\ref{HH} identify zero entries of $W$ according to such a structure. The set $\mathcal{I}_p$ in Lemma~\ref{HH} consists of $n-1$ index pairs $(i,j)$. By
imposing the corresponding $n-1$ equations \eqref{Wij}, the matrix $Z$
resulting from this choice of indices agrees with the matrix $H$ in
Lemma~\ref{HH}, which has $\operatorname{rank}(H) = n-1$.
Lemma~\ref{IMPORTANTE} ensures that
\begin{equation*}
\operatorname{rank}(HC) = n - 1.
\end{equation*}
Moreover, by orthogonality of the columns of $U$, it is clear that the specific choice $\mathbf{w} =\mathbf{1}$ satisfies \eqref{H}, which implies that $\mathbf{1} \in \ker(HC)$. Since $HC \in \mathbb{R}^{(n-1) \times n}$, we deduce
\begin{equation*}
\ker(HC) = \operatorname{span}(\mathbf{1}).
\end{equation*}
It follows that
$W = c I$.
Thus,
$R_{\mathscr{B}_{p,0}} = \sqrt{c}Q$
for some $c \in \mathbb{R}$ with $c > 0$ and for some orthogonal matrix $Q$. Since
$R_{\mathscr{B}_{p,0}}$ has the identity as its central block, see \eqref{R_struttura},
also the matrix $W$ has the identity as its central block, and therefore
$c = 1$.
We conclude that $R_{\mathscr{B}_{p,0}}$ is orthogonal.
\end{proof}

\section{Numerical Procedure}\label{sec:5}
In this section, we briefly present a numerical procedure for finding B-spline-like and outlier-free bases for the optimal spaces $\mathbb{S}_{p,i}$, without assuming any knowledge about the bases $\mathscr{E}_{p,i}$, $i = 0,1,2$.
This is motivated by the prospect of constructing similar bases -- if any --  for optimal spaces for the isogeometric discretization of biharmonic and polyharmonic eigenvalue problems \cite{Manni2023}, where no direct explicit constructions analogous to the bases $\mathscr{E}_{p,i}$ are available.
 
We focus on the case $\mathbb{S}_{p,0}$ with $p$ odd. The other cases can be addressed in a similar manner.
\begin{enumerate}
\item Let $A_{\mathscr{B}_{p,0}}$ be a candidate representation matrix for the new basis $\mathscr{B}_{p,0}$ in terms of the open-knot B-spline basis $\mathscr{B}_{p,\boldsymbol{\tau}_{p,0}}$, with the following block structure:
\begin{equation}\label{A_B}
A_{\mathscr{B}_{p,0}} = \begin{bmatrix}
A_{11} & 0 & 0 \\
0 & I_{r\times r} & 0 \\
0 & 0 & A_{33}
\end{bmatrix},
\quad
A_{11}, A_{33} \in
\mathbb{R}^{\left\lfloor \frac{p}{2} \right\rfloor \times p}.
\end{equation}
\item To ensure that $\mathscr{B}_{p,0}$ is a basis of the space $\mathbb{S}_{p,0}$, the basis elements must satisfy the homogeneous boundary conditions specified in \eqref{S_p_0}. Compute the even derivatives of the open-knot B-splines at the end points and set up the kernel of these conditions. Then, based on this kernel, determine the matrices $A_{11}$ and $A_{33}$ in terms of a set of parameters $X$, whose number only depends on the degree $p$.
\item Construct the mass and stiffness matrices $M_{\mathscr{B}_{p,0}}$ and $K_{\mathscr{B}_{p,0}}$ in terms of the mass and stiffness matrices of the open-knot B-spline basis $\mathscr{B}_{p,\boldsymbol{\tau}_{p,0}}$, using the representation matrix $A_{\mathscr{B}_{p,0}}$.
Then, minimize (numerically) the Frobenius norm of their commutator $\|M_{\mathscr{B}_{p,0}} K_{\mathscr{B}_{p,0}} - K_{\mathscr{B}_{p,0}} M_{\mathscr{B}_{p,0}}\|_{F}$ with respect to the above parameters $X$, imposing (linear) constraints on them that ensure the entries of $A_{11}$ and $A_{33}$ to be non-negative.
\item Build $A_{\mathscr{B}_{p,0}^*}$ using the computed minimized parameter values $X^*$ and obtain the new basis
$\mathscr{B}_{p,0}^* = A_{\mathscr{B}_{p,0}^*}\,\mathscr{B}_{p,\boldsymbol{\tau}_{p,0}}$.
\end{enumerate}
The solution space of the minimization problem in step~3 is non-empty and the optimal solution is non-unique. The obtained solution will depend on the chosen optimization method and can be possibly steered through initialization feed or additional constraints.
This numerical procedure finds justification in the previously presented theoretical result (Section~\ref{caratterizzazione}). Indeed, from steps~1 to~3, we deduce that the basis $\mathscr{B}_{p,0}^*$ belongs to the family $\mathfrak{F}_{p,0}$ specified in Definition~\ref{DEFINIZIONE}. From Corollary~\ref{Basi_outlierfree}, we conclude that the obtained basis is outlier-free. Furthermore, the non-negativity constraints in step~3 ensure that the resulting basis functions are non-negative, being non-negative combinations of (open-knot) B-splines.

We implemented the above numerical procedure and tested it out for the case $p=5$ and $n=7$. We obtained a basis $\mathscr{B}_{p,0}^*$ identified by the representation matrix $A_{\mathscr{B}_{p,0}^*}$ as in \eqref{A_B}, where
\begin{equation*}
A_{11}^* = \begin{bmatrix}
\ 0  & \ 0.1675 & \ 0.5024 & \ 0.8142 & \ 0.0859\ \\
\ 0  & \ 0.0023 & \ 0.0069 & \ 0.1305 & \ 0.9963\ \\
\end{bmatrix},
\end{equation*}
and $A_{33}^*$ has the same columns but in reversed order.
We remark that the matrix $A_{11}^0$ in \eqref{A_E}, corresponding to the basis $\mathscr{E}_{p,0}$, is given by
\begin{equation*}
A_{11}^0 = \begin{bmatrix}
\ 0 & \ 1/6  & \ 1/2  & \ 4/5 & \ 0\ \\
\ 0 & \ 1/60 & \ 1/20 & \ 1/5 & \ 1\ \\
\end{bmatrix},
\end{equation*}
and it holds
\begin{equation*}
A_{11}^* = R_{11}\, A_{11}^0,
\quad
R_{11} = \begin{bmatrix}
\  0.9963 & \ 0.0859\ \\
\ -0.0859 & \ 0.9963\ \\
\end{bmatrix}.
\end{equation*}
The matrix $R_{11}$ is clearly a rotation matrix (with angle $-0.086$). This is in agreement with Theorem~\ref{TEOREMA}, taking into account the matrix structure \eqref{R_struttura}. The corresponding bases $\mathscr{E}_{p,0}$ and $\mathscr{B}_{p,0}^*$ are depicted in Figure~\ref{FIGURA}. Since $r=3$ in \eqref{A_B}, the three central functions in both bases are standard (cardinal) B-splines. We close this numerical example by observing that any rotation matrix with an angle $\phi$ such that $0 \leq -\phi \leq \arctan(1/10)$ would result in a matrix $A_{11}^*$ with non-negative entries.

\begin{figure}[t!]
\centering
\begin{subfigure}{0.48\textwidth}
\includegraphics[width=\textwidth]{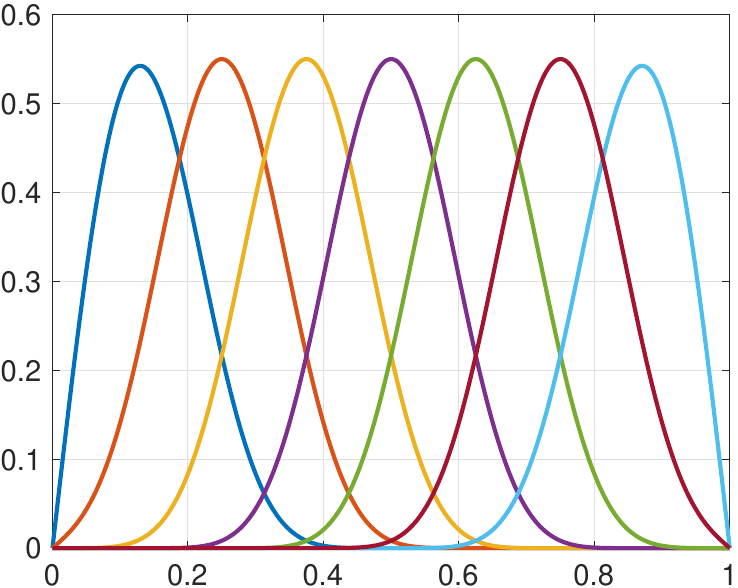}
\caption{the basis $\mathscr{E}_{p,0}$}
\end{subfigure}
\hfill
\begin{subfigure}{0.48\textwidth}
\includegraphics[width=\textwidth]{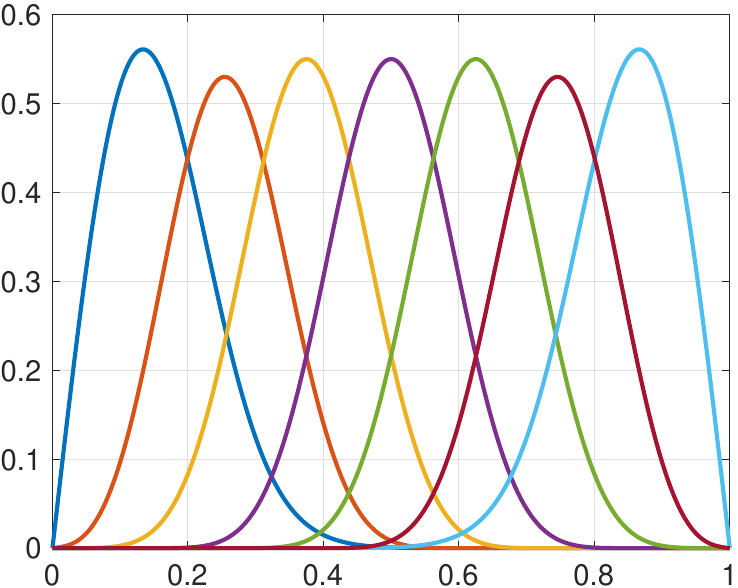}
\caption{an outlier-free basis $\mathscr{B}_{p,0}^*$}
\end{subfigure}
\caption{Two B-spline-like and outlier-free bases for the space $\mathbb{S}_{p,0}$, with $p=5$ and $n=7$: (a) the basis $\mathscr{E}_{p,0}$ defined in \eqref{Base_Eliseo}, and (b) a numerically obtained basis $\mathscr{B}_{p,0}^*$ as described in Section~\ref{sec:5}}\label{FIGURA}
\end{figure}

\section{Conclusion}\label{sec:6}
The use of optimal spline spaces in the isogeometric Galerkin method for the eigenvalue problem associated with the univariate Laplace operator -- subject to any classical boundary conditions: Dirichlet, Neumann, and mixed -- is a powerful approximation tool, which is theoretically proven to be outlier-free \cite{Lamsahel2025,Manni2022}.
While the absence of outliers is an intrinsic property of the considered discretization spaces $\mathbb{S}_{p,i}$, $i=0,1,2$, stemming from their optimality, the choice of the basis is of crucial importance from a practical point of view.

We have characterized the bases of these optimal spaces that enjoy a B-spline-like support structure and whose mass and stiffness matrices are simultaneously diagonalizable.
It turns out that such bases are orthogonally equivalent to the outlier-free bases $\mathscr{E}_{p,i}$ proposed in \cite{Divona2019,Manni2022}, and therefore share their important spectral properties derived in \cite{Lamsahel2025}. As a consequence, all the selected bases are also outlier-free.

The obtained results can be reasonably extended to the spaces proposed in \cite{Hiemstra2021} (see also \cite{Sogn2019,Takacs2016}), which are closely related to the optimal spline spaces considered here and still provide outlier-free discretizations for the eigenvalue problem associated with the univariate Laplace operator \cite{Lamsahel2025}.

Beyond their relevance for the Laplace operator, the obtained results and the related numerical procedure suggest a possible extension to the construction of similar bases for optimal spaces for the isogeometric discretization of biharmonic and polyharmonic eigenvalue problems \cite{Manni2023}.
Indeed, while a family of optimal spline spaces has been identified for the approximation of the eigenvalues of a polyharmonic operator of any order \cite{Manni2023}, the construction of suitable bases for such spaces remains an open question. Even in the biharmonic case, a direct attempt to construct a basis for the corresponding optimal spaces that reproduces the elegant expression of the basis $\mathscr{E}_{p,i}$ in terms of cardinal B-splines fails.
From this perspective, the results of this paper suggest to seek such a basis numerically by imposing a B-spline-like structure and minimizing a suitable norm of the commutator of the corresponding mass and stiffness matrices, in order to identify bases for which these matrices are simultaneously diagonalizable. This is a promising direction for future research.

\begin{acknowledgement}
C.~Manni and H.~Speleers are members of the research group GNCS (Gruppo Nazionale per il Calcolo Scientifico) of INdAM (Istituto Nazionale di Alta Matematica).
They have been supported by the MUR Excellence Department Project MatMod@TOV (CUP E83C23000330006) awarded to the Department of Mathematics of the University of Rome Tor Vergata, by the Department of Mathematics of the University of Rome Tor Vergata through the Project METRO (CUP  E83C25000630005), by a Project of Relevant
National Interest (PRIN) under the National Recovery and Resilience Plan (PNRR) funded by the European Union -- Next Generation EU (CUP E53D23017910001), and by the Italian Research Center on High Performance Computing, Big Data and Quantum Computing (CUP E83C22003230001).
\end{acknowledgement}


\begin{thebibliography}{99}
\bibitem{Bini1983}
Bini, D., Capovani, M.:
Spectral and computational properties of band symmetric Toeplitz matrices.
{Linear Algebra and its Applications} \textbf{52}, 99--126 (1983)

\bibitem{Boffi2010}
Boffi, D.:
Finite element approximation of eigenvalue problems.
{Acta Numerica} \textbf{19}, 1--120 (2010)

\bibitem{Bozzo1995}
Bozzo, E., Di Fiore, C.:
On the use of certain matrix algebras associated with discrete trigonometric transforms in matrix displacement decomposition.
{SIAM Journal on Matrix Analysis and Applications} \textbf{16}, 312--326 (1995)

\bibitem{Cottrell2006}
Cottrell, J.A., Reali, A., Bazilevs, Y., Hughes, T.J.R.:
Isogeometric analysis of structural vibrations.
{Computer Methods in Applied Mechanics and Engineering} \textbf{195}, 5257--5296 (2006)

\bibitem{Deng2021}
Deng, Q.:
Analytical solutions to some generalized and polynomial eigenvalue problems.
{Special Matrices} \textbf{9}, 240--256 (2021)

\bibitem{Difiore1995}
Di Fiore, C., Zellini, P.:
Matrix decompositions using displacement rank and classes of commutative matrix algebras.
{Linear Algebra and its Applications} \textbf{229}, 49--99 (1995)

\bibitem{Divona2019}
Di Vona, E.:
{Kolmogorov $n$-width e spazi spline ottimi}.
Tesi di Laurea Magistrale, Universit\`a degli Studi di Roma ``Tor Vergata'' (2019)

\bibitem{Ekstrom2018}
Ekstr{\"o}m, S.-E., Furci, I., Garoni, C., Manni, C., Serra-Capizzano, S., Speleers, H.:
Are the eigenvalues of the B-spline isogeometric analysis approximation of $-\Delta u = \lambda u$ known in almost closed form?.
{Numerical Linear Algebra with Applications} \textbf{25}, e2198 (2018)

\bibitem{FloaterSande2019}
Floater, M.S., Sande, E.:
Optimal spline spaces for $L^2$ $n$-width problems with boundary conditions.
{Constructive Approximation} \textbf{50}, 1--18 (2019)

\bibitem{Garoni2014}
Garoni, C., Manni, C., Pelosi, F., Serra-Capizzano, S., Speleers, H.:
On the spectrum of stiffness matrices arising from isogeometric	analysis.
{Numerische Mathematik} \textbf{127}, 751--799 (2014)

\bibitem{GodsilRoyle2001}
Godsil, C., Royle, G.:
{Algebraic Graph Theory}.
Springer (2001)

\bibitem{Hiemstra2021}
Hiemstra, R.R., Hughes, T.J.R., Reali, A., Schillinger, D.:
Removal of spurious outlier frequencies and modes from isogeometric discretizations of second- and fourth-order problems in one, two, and three dimensions.
{Computer Methods in Applied Mechanics and Engineering} \textbf{387}, 114115 (2021)

\bibitem{HornJohnson2013}
Horn, R.A., Johnson, C.R.:
{Matrix Analysis}, 2nd Ed.
Cambridge University Press (2013)

\bibitem{Lamsahel2025}
Lamsahel, N., Manni, C., Ratnani, A., Serra-Capizzano, S., Speleers, H.:
Outlier-free isogeometric discretizations for Laplace eigenvalue problems: closed-form eigenvalue and eigenvector expressions.
{Numerische Mathematik} \textbf{157}, 1397--1448 (2025)

\bibitem{Lyche2018}
Lyche, T., Manni, C., Speleers, H.:
Foundations of spline theory: B-splines, spline approximation, and hierarchical refinement.
In: Lyche, T., Manni, C., Speleers, H. (eds.) Splines and PDEs: From Approximation Theory to Numerical Linear Algebra. In: Lecture Notes in Mathematics, vol. 2219, pp. 1--76. Springer (2018)

\bibitem{Manni2022}
Manni, C., Sande, E., Speleers, H.:
Application of optimal spline subspaces for the removal of spurious outliers in isogeometric discretizations.
{Computer Methods in Applied Mechanics and Engineering} \textbf{389}, 114260 (2022)

\bibitem{Manni2023}
Manni, C., Sande, E., Speleers, H.:
Outlier-free spline spaces for isogeometric discretizations of biharmonic and polyharmonic eigenvalue problems.
{Computer Methods in Applied Mechanics and Engineering} \textbf{417}, 116314 (2023)

\bibitem{Ricci2025}
Ricci, D.:
{Basi outlier-free per spazi spline ottimi per l'operatore di Laplace}.
Tesi di Laurea Magistrale, Universit\`a degli Studi di Roma ``Tor Vergata'' (2025)

\bibitem{Sande2020}
Sande, E., Manni, C., Speleers, H.:
Explicit error estimates for spline approximation of arbitrary smoothness in isogeometric analysis.
{Numerische Mathematik} \textbf{144}, 889--929 (2020)

\bibitem{Sogn2019}
Sogn, J., Takacs, S.:
Robust multigrid solvers for the biharmonic problem in isogeometric analysis.
{Computers \& Mathematics with Applications} \textbf{77}, 105--124 (2019)

\bibitem{Takacs2016}
Takacs, S., Takacs, T.:
Approximation error estimates and inverse inequalities for {B}-splines of maximum smoothness.
{Mathematical Models and Methods in Applied Sciences} \textbf{26}, 1411--1445 (2016)

\end{thebibliography}
\end{document}